\documentclass[12pt]{article}
\usepackage[margin=1in]{geometry}  
\usepackage{graphicx}              
\usepackage{amsmath}               
\usepackage{amsthm}                
\usepackage[ansinew]{inputenc}
\usepackage{array,color}
\usepackage{amsxtra}
\usepackage{amstext}
\usepackage{latexsym}
\usepackage{dsfont}
\usepackage[all]{xy}
\usepackage{amsmath,amsfonts,amssymb}
\usepackage{cite}

\usepackage{authblk}

\usepackage{enumitem}  
\usepackage{calc}
\setlist{labelindent=1pt,itemsep=.5em}
\setlist[itemize]{leftmargin=1.2cm}
\setlist[enumerate]{itemindent=0em,leftmargin=1.2cm}
\setlist[enumerate,1]{label={\upshape(\roman*)}}


\allowdisplaybreaks

\makeatletter
\newcommand{\subjclass}[2][2020]{%
  \let\@oldtitle\@title%
  \gdef\@title{\@oldtitle\footnotetext{#1 \emph{Mathematics subject classification}: #2}}%
}
\newcommand{\keywords}[1]{%
  \let\@@oldtitle\@title%
  \gdef\@title{\@@oldtitle\footnotetext{\emph{Keywords}: #1.}}%
}
\makeatother

\newtheorem{thm}{Theorem}[section]
\newtheorem{cor}[thm]{Corollary}
\newtheorem{lem}[thm]{Lemma}
\newtheorem{prop}[thm]{Proposition}
\theoremstyle{definition}
\newtheorem{defn}[thm]{Definition}
\theoremstyle{remark}
\newtheorem{rem}[thm]{Remark}

\numberwithin{equation}{section}



\title{Non-degenerate Killing forms on Hom-Lie superalgebras}
\author{Abdoreza Armakan$^{1}$, Mohammad Reza Farhangdoost$^{1}$, \authorcr
Sergei Silvestrov$^{2}$ \\
\small{$^{1}$Department of Mathematics, College of Sciences, Shiraz University, \authorcr
P.O. Box 71457-44776, Shiraz, Iran. \authorcr
e-mails: r.armakan@shirazu.ac.ir, reza.armakan@gmail.com; farhang@shirazu.ac.ir \authorcr
$^{2}$ Division of Applied Mathematics,
School of Education, Culture and Communication, \authorcr
M\"{a}lardalen University, Box 883, 72123 V\"{a}steras, Sweden. \authorcr
e-mail: sergei.silvestrov@mdh.se}}

\subjclass[2020]{17B61, 17D30, 17B70}
\keywords{Hom-Lie superalgebra, simple Hom-Lie superalgebra, Killing form}

\date{}

\begin{document}

\maketitle


\abstract{In this paper we investigate some important basic properties of simple Hom-Lie superalgebras and show that a Hom-Lie superalgebra does not have any left or right nontrivial ideals. Moreover, we classify invariant bilinear forms on a given simple Hom-Lie superalgebra. Then we study the Killing forms on a Hom-Lie algebra which are examples of the invariant bilinear forms. Making use of the Killing forms, we find conditions for a Hom-Lie superalgebra to be classical. Furthermore, we check the conditions in which the Killing form of a Hom-Lie superalgebra is non-degenerate.}


\section{Introduction} \label{sec:intro}
The investigations of various quantum deformations or $q$-deformations of Lie algebras began a period of rapid expansion in 1980's stimulated by introduction of quantum groups motivated by applications to the quantum Yang-Baxter equation, quantum inverse scattering methods and constructions of the quantum deformations of universal enveloping algebras of semi-simple Lie algebras. Various $q$-deformed Lie algebras have appeared in physical contexts such as string theory, vertex models in conformal field theory, quantum mechanics and quantum field theory in the context of deformations of infinite-dimensional algebras, primarily the Heisenberg algebras, oscillator algebras and Witt and Virasoro algebras. In \cite{AizawaSaito,ChaiElinPop,ChaiIsLukPopPresn,ChaiKuLuk,ChaiPopPres,CurtrZachos1,DamKu,DaskaloyannisGendefVir,Hu,Kassel92,LiuKQuantumCentExt,LiuKQCharQuantWittAlg,LiuKQPhDthesis},
it was in particular discovered that in these $q$-deformations of Witt and Visaroro algebras and some related algebras, some interesting $q$-deformations of Jacobi identities, extending Jacobi identity for Lie algebras, are satisfied. This has been one of the initial motivations for the development of general quasi-deformations and discretizations of Lie algebras of vector fields using more general $\sigma$-derivations (twisted derivations) in \cite{HLS}.

Hom-Lie algebras and more general quasi-Hom-Lie algebras were introduced first by Larsson, Hartwig and Silvestrov \cite{HLS}, where the general quasi-deformations and discretizations of Lie algebras of vector fields using more general $\sigma$-derivations (twisted derivations) and a general method for construction of deformations of Witt and Virasoro type algebras based on twisted derivations have been developed, initially motivated by the $q$-deformed Jacobi identities observed for the $q$-deformed algebras in physics, along with $q$-deformed versions of homological algebra and discrete modifications of differential calculi. Hom-Lie algebras, Hom-Lie superalgebras, Hom-Lie color algebras and more general quasi-Lie algebras and color quasi-Lie algebras where introduced first in \cite{LarssonSilv2005:QuasiLieAlg,LarssonSilv:GradedquasiLiealg,SigSilv:CzechJP2006:GradedquasiLiealgWitt}. Quasi-Lie algebras and color quasi-Lie algebras encompass within the same algebraic framework the quasi-deformations and discretizations of Lie algebras of vector fields by $\sigma$-derivations obeying twisted Leibniz rule, and the well-known generalizations of Lie algebras such as color Lie algebras, the natural generalizations of Lie algebras and Lie superalgebras. In quasi-Lie algebras, the skew-symmetry and the Jacobi identity are twisted by deforming twisting linear maps, with the Jacobi identity in quasi-Lie and quasi-Hom-Lie algebras in general containing six twisted triple bracket terms. In Hom-Lie algebras, the bilinear product satisfies the non-twisted skew-symmetry property as in Lie algebras, and the Hom-Lie algebras Jacobi identity has three terms twisted by a single linear map, reducing to the Lie algebras Jacobi identity when the twisting linear map is the identity map. Hom-Lie admissible algebras have been considered first in \cite{ms:homstructure}, where in particular the Hom-associative algebras have been introduced and shown to be Hom-Lie admissible, that is leading to Hom-Lie algebras using commutator map as new product, and in this sense constituting a natural generalization of associative algebras as Lie admissible algebras. Since the pioneering works \cite{HLS,LarssonSilvJA2005:QuasiHomLieCentExt2cocyid,LarssonSilv:GradedquasiLiealg,LarssonSilv2005:QuasiLieAlg,LarssonSilv:QuasidefSl2,ms:homstructure}, Hom-algebra structures expanded into a popular area with increasing number of publications in various directions. Hom-algebra structures of a given type include their classical counterparts and open broad possibilities for deformations, Hom-algebra extensions of cohomological structures and representations, formal deformations of Hom-associative and Hom-Lie algebras, Hom-Lie admissible Hom-coalgebras, Hom-coalgebras, Hom-Hopf algebras \cite{AmmarEjbehiMakhlouf:homdeformation,BenMakh:Hombiliform,LarssonSilvJA2005:QuasiHomLieCentExt2cocyid,LarssonSilvestrovGLTMPBSpr2009:GenNComplTwistDer,MakhSil:HomHopf,MakhSilv:HomAlgHomCoalg,MakhSilv:HomDeform,Sheng:homrep,Yau:HomolHom,Yau:EnvLieAlg}.
Hom-Lie algebras, Hom-Lie superalgebras and color Hom-Lie algebras have been further investigated in various aspects for example in \cite{AmmarEjbehiMakhlouf:homdeformation,AmmarMakhloufHomLieSupAlg2010,AmmarMakhloufSaadaoui2013:CohlgHomLiesupqdefWittSup,
IJGMMP,ArmakanSilv:envelalgcertaintypescolorHomLie,ArmakanSilvFarh:envelopalgcolhomLiealg,ArmakanSilvFarh:exthomLiecoloralg,
Bakayoko2014:ModulescolorHomPoisson,BakayokoDialo2015:genHomalgebrastr,BakyokoSilvestrov:Homleftsymmetriccolordialgebras,BakyokoSilvestrov:MultiplicnHomLiecoloralg,BakayokoToure2019:genHomalgebrastr,
BenMakh:Hombiliform,CaoChen2012:SplitregularhomLiecoloralg,GuanChenSun:HomLieSuperalgebras,MabroukNcibSilvestrov2020:GenDerRotaBaxterOpsnaryHomNambuSuperalgs,ms:homstructure,MakhSilv:HomDeform,MakhSil:HomHopf,
MakhSilv:HomAlgHomCoalg,Makhlouf2010:ParadigmnonassHomalgHomsuper,ShengBai2014:homLiebialg,SigSilv:CzechJP2006:GradedquasiLiealgWitt,
LarssonSigSilvJGLTA2008:QuasiLiedefFttN,RichardSilvestrovJA2008,RichardSilvestrovGLTbdSpringer2009,ShengChen2013:HomLie2algebras,
Sheng:homrep,SigSilv:GLTbdSpringer2009,SilvestrovParadigmQLieQhomLie2007,Yau2009:HomYangBaxterHomLiequasitring,Yau:EnvLieAlg,
Yau:HomolHom,Yau:HomBial,Yuan2012:HomLiecoloralgstr,ZhouChenMa:GenDerHomLiesuper}.

In this article, we consider properties of simple Hom-Lie superalgebras and show that a Hom-Lie superalgebra does not have any left or right nontrivial ideals, classify invariant bilinear forms on a given simple Hom-Lie superalgebra, study and make use of the Killing forms for finding conditions for a Hom-Lie superalgebra to be classical and check the conditions in which the Killing form of a Hom-Lie superalgebra is non-degenerate. In Section \ref{sec:HomLiesuperalg}, we review some basic notions and definitions for Hom-Lie algebras and Hom-Lie superalgebras. In Section \ref{sec:SimpleHomLiesuperalg}, we present the notion of simple Hom-Lie superalgebras together with some properties of this class of Hom-Lie superalgebras. In Section \ref{sec:Killingforms}, we focus on some properties of a class of invariant bilinear forms, the Killing forms. In particular, we study the Cartan's criterion for Hom-Lie superalgebras, i.e. we find conditions under which the Killing form of a Hom-Lie superalgebra is non-degenerate.

\section{Hom-Lie superalgebras}
\label{sec:HomLiesuperalg}
Recall that a Hom-module is a pair $(M,\alpha)$ consisting of an $\mathbf{k}$-module $M$ and a linear operator $\alpha:M\rightarrow M$. We now recall some definitions to start.
\begin{defn}[\cite{HLS,LarssonSilvJA2005:QuasiHomLieCentExt2cocyid,LarssonSilv:GradedquasiLiealg,LarssonSilv2005:QuasiLieAlg,ms:homstructure}]
A Hom-Lie algebra is a triple $(\mathfrak{g},[\cdot,\cdot], \alpha)$, where $\mathfrak{g} $ is a vector space equipped with a skew-symmetric bilinear map $[\cdot,\cdot]: \mathfrak{g} \times \mathfrak{g} \rightarrow \mathfrak{g}$ and a linear map $\alpha :\mathfrak{g} \rightarrow \mathfrak{g} $ such that
$$[\alpha(x),[y,z]]+[\alpha(y),[z,x]]+[\alpha(z),[x,y]]=0, $$
for all $x,y,z \in \mathfrak{g}$ , which is called Hom-Jacobi identity.
\end{defn}
A Hom-Lie algebra is called a multiplicative Hom-Lie algebra if $\alpha$ is an algebra morphism, i.e. for any $x,y\in \mathfrak{g}$,
$$\alpha([x,y])=[\alpha(x),\alpha(y)].$$

We call a Hom-Lie algebra regular if $\alpha$ is an automorphism.

A sub-vector space $\mathfrak{h}\subset \mathfrak{g}$ is a Hom-Lie sub-algebra of $(\mathfrak{g},[\cdot,\cdot], \alpha)$ if $$\alpha(\mathfrak{h})\subset \mathfrak{h}$$ and $\mathfrak{h}$ is closed under the bracket operation, i.e.
$$[x_{1},x_{2}]_{g}\in \mathfrak{h},$$
for all $x_{1},x_{2}\in \mathfrak{h}.$

Let $(\mathfrak{g},[\cdot,\cdot], \alpha)$ be a multiplicative Hom-Lie algebra. Denote by $\alpha^{k}$ the $k$-times composition of $\alpha$ by itself, for any nonnegative integer $k$,
$$\alpha^{k}=\underbrace{\alpha \circ \dots \circ \alpha}_{k}, $$
where $\alpha^{0}=Id$ and $\alpha^{1}=\alpha$. For a regular Hom-Lie algebra $\mathfrak{g}$, let
$$\alpha^{-k}=\underbrace{\alpha^{-1} \circ \dots \circ \alpha^{-1}}_{k}.$$

\begin{defn}[\cite{BahturinMikhPetrZaicevIDLSbk92,Berezin1987:IntroSuperanal,BerezinKatsGI1970:Liegrcomanticompar,BerezinLeites1975:Supermanifolds,Kac99:LieSuper,SCHbook}]
A Lie superalgebra is a $\mathbb{Z}_{2}$-graded vector space $\mathfrak{g}=\mathfrak{g}_{0}\oplus \mathfrak{g}_{1}$, together with a graded Lie bracket $[\cdot,\cdot]:\mathfrak{g}\times \mathfrak{g} \rightarrow \mathfrak{g}$ of degree zero, i.e. $[\cdot,\cdot]$ is a bilinear map satisfying  $$[\mathfrak{g}_{i},\mathfrak{g}_{j}]\subset \mathfrak{g}_{i+j(mod2)},$$ such that for homogeneous elements $x,y,z \in \mathfrak{g}$, the following identities hold:
\begin{enumerate}[label=\upshape{\arabic*.},left=7pt]
  \item $[x,y]=-(-1)^{|x||y|}[y,x],$
  \item $[x,[y,z]]=[[x,y],z]+(-1)^{|x||y|}[y,[x,z]].$
\end{enumerate}
\end{defn}

In the simliar way as Lie algebras are extended to Lie superalgebras, the Hom-Lie algebras form a subclass of Hom-Lie superalgebras, which is a subclass of color Hom-Lie algebras, which in their turn is a subclass of more general (color) quasi-Lie algebras introduced first in \cite{LarssonSilv2005:QuasiLieAlg,LarssonSilv:GradedquasiLiealg,SigSilv:CzechJP2006:GradedquasiLiealgWitt}.

\begin{defn}[\cite{AmmarMakhloufHomLieSupAlg2010,LarssonSilv2005:QuasiLieAlg,LarssonSilv:GradedquasiLiealg,SigSilv:CzechJP2006:GradedquasiLiealgWitt}]
A Hom-Lie superalgebra is a triple $(\mathfrak{g},[\cdot,\cdot],\alpha)$ consisting of a superspace $\mathfrak{g}$, a bilinear map $[\cdot,\cdot]:\mathfrak{g}\times \mathfrak{g} \rightarrow \mathfrak{g}$ and a superspace homomorphism $\alpha:\mathfrak{g}\rightarrow \mathfrak{g}$, both of degree zero, satisfying
\begin{enumerate}[label=\upshape{\arabic*.},left=7pt]
  \item $[x,y]=-(-1)^{|x||y|}[y,x],$
  \item $(-1)^{|x||z|}[\alpha(x),[y,z]]+(-1)^{|y||x|}[\alpha(y),[z,x]]+(-1)^{|z||y|}[\alpha(z),[x,y]]=0,$
\end{enumerate}
for all homogeneous elements $x,y,z \in \mathfrak{g}$.
\end{defn}
Therefore, a Hom-Lie superalgebra $\mathfrak{g}$ is a $\mathds{Z}_{2}$ graded Hom-algebra $\mathfrak{g}=\mathfrak{g}_{\bar{0}}\oplus \mathfrak{g}_{\bar{1}}$, where we consider $\mathds{Z}_{2}=\{\bar{0}, \bar{1}\}$. One can see that $\mathfrak{g}_{\bar{0}}$ is a Hom-Lie algebra and $\mathfrak{g}_{\bar{1}}$ is a $\mathfrak{g}_{\bar{0}}$-module.

In particular, if for all $x,y \in \mathfrak{g}$ we have
$$\alpha([x,y])=[\alpha(x),\alpha(y)],$$
then we call $(\mathfrak{g},[\cdot,\cdot],\alpha)$, a {\it multiplicative} Hom-Lie superalgebra.

Let $(\mathfrak{g},[\cdot,\cdot],\alpha)$ and $(\mathfrak{g}',[\cdot,\cdot]',\alpha')$ be two Hom-Lie superalgebras. A homomorphism of degree zero $f:\mathfrak{g}\rightarrow \mathfrak{g}'$ is said to be a morphism of Hom-Lie superalgebras if
\begin{enumerate}[label=\upshape{\arabic*.},left=7pt]
  \item $[f(x),f(y)]'=f([x,y])$, for all $x,y \in \mathfrak{g},$
  \item $f \circ \alpha=\alpha' \circ f.$
\end{enumerate}

We can now recall the notion of an $\alpha^{k}$-derivation.

\begin{defn}[\cite{AmmarMakhloufSaadaoui2013:CohlgHomLiesupqdefWittSup}]
Let $(\mathfrak{g},[\cdot,\cdot],\alpha)$ be a Hom-Lie superalgebra. For any nonnegative integer $k$, a linear map $D: \mathfrak{g} \rightarrow \mathfrak{g}$ of degree $d$ is called an $\alpha^{k}$-derivation of the multiplicative Hom-Lie superalgebra $(\mathfrak{g},[\cdot,\cdot], \alpha)$, if
\begin{enumerate}[label=\upshape{\arabic*.},left=7pt]
\item $[D,\alpha]=0$, i.e. $D\circ \alpha= \alpha \circ D,$
\item $D([x,y]_{g})=[D(x),\alpha^{k}(y)]_{\mathfrak{g}}+(-1)^{d|x|}[\alpha^{k}(x),D(y)]_{\mathfrak{g}}$, for all $x,y\in \mathfrak{g}$.
\end{enumerate}
\end{defn}
Denote by $Der_{\alpha^{k}}(\mathfrak{g})$ the set of all $\alpha^{k}$-derivations of the multiplicative Hom-Lie superalgebra $(\mathfrak{g},[\cdot,\cdot], \alpha)$.

For any $x\in \mathfrak{g}$ satisfying $\alpha(x)=x$, define $ad_{k}(x):\mathfrak{g} \rightarrow \mathfrak{g}$ by
$$ad_{k}(x)(y)=[\alpha^{k}(y),x]_{\mathfrak{g}},$$
for all $y\in \mathfrak{g}$.
It is shown in \cite{AmmarMakhloufSaadaoui2013:CohlgHomLiesupqdefWittSup} that $ad_{k}(x)$ is an $\alpha^{k+1}$-derivation, called an inner $\alpha^{k+1}$-derivation.
So,
$$Inn_{\alpha^{k}}(\mathfrak{g})=\{[\alpha^{k-1}(\cdot),x]_{\mathfrak{g}}|x\in \mathfrak{g}, \alpha(x)=x\}.$$
It is also shown that
$$Der(\mathfrak{g})=\bigoplus_{k\geq -1}Der_{\alpha^{k}}(\mathfrak{g})$$
is a Hom-Lie algebra.

\section{Simple Hom-Lie superalgebras}
\label{sec:SimpleHomLiesuperalg}
In this section, simple Hom-Lie superalgebras are introduced and several important properties of this class of Hom-Lie superalgebras are given here. In addition, we manage to give some useful results on a special class of simple Hom-Lie superalgebras, namely, classical simple Hom-Lie superalgebras which will be used in Section \ref{sec:Killingforms}.
\begin{defn}\label{simplehomliesup}
A Hom-Lie superalgebra $\mathfrak{g}$ is called simple if it does not have any nontrivial graded ideals and $[\mathfrak{g},\mathfrak{g}]\neq \{0\}$.
\end{defn}
\begin{rem}
From Definition \ref{simplehomliesup}, one can consider the following properties:
\begin{enumerate}[label=\upshape{(\roman*)},left=7pt]
  \item  A left or right graded Hom-ideal of $\mathfrak{g}$ is automatically a two sided ideal.
  \item  The condition $[\mathfrak{g},\mathfrak{g}]\neq \{0\}$ serves to eliminate the zero-dimensional and the two one-dimensional Hom-Lie superalgebras. It follows that $[\mathfrak{g},\mathfrak{g}]=\mathfrak{g}$.
\end{enumerate}
\end{rem}
According to Definition \ref{simplehomliesup} one might get that a simple Hom-Lie superalgebra might contain nontrivial non-graded ideals, which here is not the case.
\begin{lem}\label{due}
Let $\mathfrak{g}$ be a simple Hom-Lie superalgebra. If $\nu$ is an odd linear mapping of $\mathfrak{g}$ into itself such that
\begin{equation}\label{2.8}
  \nu([a,b])=[a,\nu(b)],
\end{equation}

for all $a,b \in \mathfrak{g},$
then $\nu = 0$.
\end{lem}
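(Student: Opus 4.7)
The plan is a Schur-type dichotomy argument. First, since $\nu$ is homogeneous of odd degree, both $\ker\nu$ and $\operatorname{Im}\nu$ are $\mathbb{Z}_2$-graded subspaces of $\mathfrak{g}$. The hypothesis \eqref{2.8} immediately shows $\ker\nu$ is stable under left bracketing (since $\nu([a,b])=[a,\nu(b)]=0$ whenever $b\in\ker\nu$) and $\operatorname{Im}\nu$ is stable under left bracketing (since $[a,\nu(b)]=\nu([a,b])\in\operatorname{Im}\nu$); the remark preceding the lemma then promotes these to two-sided graded Hom-ideals. By simplicity, $\ker\nu,\operatorname{Im}\nu\in\{0,\mathfrak{g}\}$, so either $\nu=0$ (and we are done) or $\nu$ is bijective.

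Assuming $\nu$ is bijective, I would next derive the auxiliary sign identity
\[
 [\nu(a),b] \;=\; (-1)^{|b|}\nu([a,b]),
\]
which follows from \eqref{2.8}, the graded skew-symmetry, and the fact that $|\nu(a)|=|a|+1$. The decisive step is then to compute $[\nu(a),\nu(b)]$ in two different ways. Applying \eqref{2.8} with $a$ replaced by $\nu(a)$ and then the auxiliary identity gives
\[
 [\nu(a),\nu(b)] \;=\; \nu([\nu(a),b]) \;=\; (-1)^{|b|}\nu^{2}([a,b]) \;=\; (-1)^{|b|}[a,\nu^{2}(b)].
\]
Applying the auxiliary identity with $b$ replaced by $\nu(b)$ (whose degree is $|b|+1$) instead yields
\[
 [\nu(a),\nu(b)] \;=\; (-1)^{|b|+1}\nu([a,\nu(b)]) \;=\; -(-1)^{|b|}[a,\nu^{2}(b)].
\]
Equating the two evaluations forces $[a,\nu^{2}(b)]=0$ for all $a,b\in\mathfrak{g}$. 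Since $\nu^{2}$ is a bijection, $\nu^{2}(b)$ exhausts $\mathfrak{g}$, so $[\mathfrak{g},\mathfrak{g}]=\{0\}$, contradicting $[\mathfrak{g},\mathfrak{g}]\neq\{0\}$ in the definition of a simple Hom-Lie superalgebra. Hence $\nu=0$.

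The main obstacle is the super sign bookkeeping: the whole argument pivots on the two evaluations of $[\nu(a),\nu(b)]$ carrying opposite signs, and this discrepancy appears precisely because $\nu$ is odd (for an even map the signs would coincide and no contradiction would follow). A secondary technical point is to verify that $\ker\nu$ and $\operatorname{Im}\nu$ are $\alpha$-invariant, as required by the paper's notion of a graded Hom-ideal; in the multiplicative setting this reduces to showing that $\nu$ commutes with $\alpha$, which one can extract from \eqref{2.8} together with the multiplicativity of $\alpha$.
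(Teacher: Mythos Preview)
Your argument is correct and structurally the same as the paper's: use simplicity on $\ker\nu$ and $\operatorname{Im}\nu$ to reduce to the bijective case, then force $[\mathfrak{g},\mathfrak{g}]=\{0\}$ for a contradiction. The only difference lies in how the contradiction is extracted. The paper restricts first to homogeneous $a,b$ of the \emph{same} degree, observes that $[\nu(a),\nu(b)]=\pm\nu^{2}([a,b])$ has one side symmetric in $a,b$ and the other skew-symmetric (hence $[a,b]=0$), and then bootstraps to the mixed-degree case using \eqref{2.8}. You instead compute $[\nu(a),\nu(b)]$ twice with opposite signs, uniformly for all homogeneous $a,b$; this is slightly cleaner and avoids the case split. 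Your final caveat about $\alpha$-invariance of $\ker\nu$ and $\operatorname{Im}\nu$ is a genuine technical point that the paper's own proof also leaves unaddressed.
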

\begin{proof}
  One can easily see that the kernel and the image of $\nu$ are graded ideals of $\mathfrak{g}$, hence either $\nu$ is bijective or it equals to zero.
  Suppose that $\nu$ is bijective. Let $a$ and $b$ be any homogeneous elements of $\mathfrak{g}$. If $a$ and $b$ have the same degree, then
  $$[\nu(a),\nu(b)]=-\nu^{2}([a,b]).$$
  But one side of the equation is symmetric in $a,b$ and the other side is skew-symmetric. Therefore, $[a,b]=0$ if $a$ and $b$ are homogenous of the same degree.
  Then again, if $a$ and $b$ are homogenous of not the same degrees, $a$ and $\nu(b)$ are homogenous of the same degree, \eqref{2.8} and our previous result imply that $[a,b]=0$. So, we got that $[\mathfrak{g},\mathfrak{g}]=\{0\}$ which is a contradiction.

\end{proof}
\begin{prop}\label{prop1}
  A simple Hom-Lie superalgebra $\mathfrak{g}$ does not have any left or right nontrivial ideals.
\end{prop}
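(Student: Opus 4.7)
The plan is to lift simplicity from graded two-sided ideals (Definition \ref{simplehomliesup}) to arbitrary one-sided ideals, using Lemma \ref{due} to handle possible non-gradedness. Super-antisymmetry $[a,b] = -(-1)^{|a||b|}[b,a]$ already upgrades any left ideal to a right ideal, so it is enough to consider a single left Hom-ideal $I$ of $\mathfrak{g}$; the only real issue is that $I$ need not be graded.

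Given such $I$, I would form its largest graded subspace $\widetilde{I} := (I \cap \mathfrak{g}_{\bar{0}}) \oplus (I \cap \mathfrak{g}_{\bar{1}})$. It is itself a graded Hom-ideal of $\mathfrak{g}$ (closed under $\alpha$, since $\alpha$ has degree zero, and under $[\mathfrak{g},\cdot]$ by parity decomposition of the bracket), so simplicity forces $\widetilde{I} \in \{0, \mathfrak{g}\}$, with $\widetilde{I} = \mathfrak{g}$ immediately giving $I = \mathfrak{g}$. The hard case is $\widetilde{I} = 0$: then no nonzero $x \in I$ is homogeneous, and since $I \cap \mathfrak{g}_{\bar{1}} = 0$ the even part of any $x \in I$ uniquely determines its odd part. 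Setting $L := p_{\bar{0}}(I)$ and $M := p_{\bar{1}}(I)$, this gives mutually inverse linear maps $\nu : L \to M$ and $\mu : M \to L$ defined by $\nu(x_{\bar{0}}) = x_{\bar{1}}$ and $\mu(x_{\bar{1}}) = x_{\bar{0}}$ whenever $x_{\bar{0}} + x_{\bar{1}} \in I$.

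Next, I would take brackets of homogeneous $a \in \mathfrak{g}$ with the elements $x_{\bar{0}} + \nu(x_{\bar{0}})$ and $\mu(x_{\bar{1}}) + x_{\bar{1}}$ of $I$ and read off even and odd components. This accomplishes two things simultaneously: first, $L \oplus M$ is a nonzero graded Hom-ideal of $\mathfrak{g}$, so by simplicity $L = \mathfrak{g}_{\bar{0}}$ and $M = \mathfrak{g}_{\bar{1}}$; second, the odd operator $\eta := \nu + \mu : \mathfrak{g} \to \mathfrak{g}$ satisfies $\eta([a,b]) = [a, \eta(b)]$ on each of the four parity sectors, hence for all $a,b$ by linearity. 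Lemma \ref{due} then forces $\eta = 0$, so in particular $\nu = 0$, which gives $x_{\bar{0}} = x_{\bar{0}} + \nu(x_{\bar{0}}) \in I$ for every $x_{\bar{0}} \in \mathfrak{g}_{\bar{0}}$. This contradicts $I \cap \mathfrak{g}_{\bar{0}} = 0$ unless $\mathfrak{g}_{\bar{0}} = 0$, in which case $\mathfrak{g}$ is concentrated in degree $\bar{1}$ and $I$ is automatically graded, reducing to the case already handled.

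The main obstacle is the parity bookkeeping. Both the ideal property of $L \oplus M$ and the identity $\eta([a,b]) = [a,\eta(b)]$ split into four cases indexed by $(|a|,|b|) \in \mathbb{Z}_{2} \times \mathbb{Z}_{2}$, and in the two "mixed" cases one has to pair $\nu$ against $\mu$ correctly so that the sign-free identity required by Lemma \ref{due} emerges without any stray super-sign. The computations themselves are routine, but matching parities on both sides of each identity is easy to muddle and is where care is needed.
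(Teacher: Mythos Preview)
Your approach is correct and essentially the same as the paper's. The paper packages the argument via the parity automorphism $\kappa(a)=(-1)^{|a|}a$: for a nontrivial left ideal $I$ it forms the graded ideals $I\cap\kappa(I)$ and $I+\kappa(I)$, forces $I\cap\kappa(I)=\{0\}$ and $I+\kappa(I)=\mathfrak{g}$, and then defines an odd map $\nu$ by $\nu|_{I}=\mathrm{id}$, $\nu|_{\kappa(I)}=-\mathrm{id}$, to which Lemma~\ref{due} is applied. These objects coincide with yours: $I\cap\kappa(I)=\widetilde{I}$, $I+\kappa(I)=L\oplus M$, and the paper's $\nu$ is exactly your $\eta=\nu+\mu$; so the two arguments differ only in notation, with the paper's use of $\kappa$ slightly streamlining the parity bookkeeping you flag as the main obstacle.
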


\begin{proof}
  The linear mapping
  $\kappa:\mathfrak{g}\rightarrow \mathfrak{g}$,
  defined by
  $\kappa(a)=(-1)^{s}a$
  for $a\in \mathfrak{g}_{s}, s\in \mathds{Z}_{2}$, is an automorphism of the Hom-Lie superalgebra $\mathfrak{g}$. For any element $b \in \mathfrak{g}$, its homogeneous component of degree $t\in \mathds{Z}_{2}$ is $\frac{1}{2}(b+(-1)^{t}\kappa(b))$. In particular, a subspace of $\mathfrak{g}$ is $\mathds{Z}_{2}$-graded if and only if it is invariant under $\kappa$.

  Let $I$ be a nontrivial left ideal of $\mathfrak{g}$. Then $\kappa(I)$ is also a left ideal of $\mathfrak{g}$. Therefore, $I+\kappa(I)$ and $I\cap \kappa(I)$ are graded ideals of $\mathfrak{g}$ and hence
  $$I+ \kappa(I)=\mathfrak{g}, \quad I\cap \kappa(I)=\{0\}.$$
  Consequently, the vector space $\mathfrak{g}$ is the direct sum of its subspaces $I$ and $\kappa(I)$. Furthermore, we have
  $$\mathfrak{g}_{t}=\{b+(-1)^{t}\kappa(b)| b\in I\},$$
  for $t\in \mathds{Z}_{2}$. Let $\nu:\mathfrak{g}\rightarrow \mathfrak{g}$ be the linear mapping which is defined by
  $$\nu(b)=b, \quad \nu(\kappa(b))=-\kappa(b),$$
  for all $b\in I$. One can see that $\nu^{2}=id$ and
  $$\nu(\mathfrak{g}_{\bar{0}})=\mathfrak{g}_{\bar{1}}, \quad \nu(\mathfrak{g}_{\bar{1}})=\mathfrak{g}_{\bar{0}}.$$
Moreover, the fact that $I$ and $\kappa(I)$ are left ideals implies the following property for $\nu$:
$$\nu([a,b])=[a,\nu(b)],$$
for all $a,b\in \mathfrak{g}$. But due to Lemma \ref{due}, a mapping $\nu$, having these properties doesn't exist.
One can have the same argument for a right ideal $I$.

\end{proof}
For our next results, we need to recall a useful notation from \cite{SCHbook}. Let $V$ be a finite-dimensional $\mathds{Z}_{2}$-graded vector space and let
$\gamma:V \rightarrow V$
be the linear mapping which satisfies
$\gamma(x)=(-1)^{\xi}x,$
where $x\in V_{\xi}$ and $\xi \in \mathds{Z}_{2}$. A linear form $supertrace$ is defined on the general linear Lie superalgebra $pl(V)$ by
$$str(A)=Tr(\gamma A),$$
for all $A\in pl(V)$. The linear form $str$ is even and $pl(V)$ invariant:
$$str ([A,B])=0,$$
for all $A,B \in pl(V)$ (see \cite{SCHbook}).
\begin{lem}\label{lem2}
Let $\mathfrak{g}$ be a simple Hom-Lie superalgebra. Then
\begin{enumerate}[label=\upshape{(\roman*)},left=7pt]
  \item \label{lem2:i} $[\mathfrak{g}_{\bar{0}},\mathfrak{g}_{\bar{1}}]=\mathfrak{g}_{\bar{1}}$.
  \item \label{lem2:ii} If $\mathfrak{g}_{\bar{1}}\neq \{0\}$, then
  $[\mathfrak{g}_{\bar{1}},\mathfrak{g}_{\bar{1}}]=\mathfrak{g}_{\bar{0}}$ and $\{a\in \mathfrak{g} \mid [a,\mathfrak{g}_{\bar{1}}]=\{0\}\}=\{0\}.$
  Particularly, the adjoint representation of $\mathfrak{g}_{\bar{0}}$ in $\mathfrak{g}_{\bar{1}}$ is faithful.
  \item \label{lem2:iii} If $\rho:a\rightarrow a_{V}$ is a graded representation of $\mathfrak{g}$ in some finite-dimensional graded vector space $V$, then
      $str(a_{v})=0$ for all $a\in \mathfrak{g}.$
\end{enumerate}
\end{lem}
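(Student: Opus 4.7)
The plan is to handle parts (i) and (ii) by constructing suitable graded ideals of $\mathfrak{g}$ and invoking simplicity together with the consequence $[\mathfrak{g},\mathfrak{g}]=\mathfrak{g}$ recorded after Definition \ref{simplehomliesup}, while part (iii) will follow at once from that consequence together with the stated invariance $str([A,B])=0$ on $pl(V)$. Throughout the computations I take $\mathfrak{g}$ to be multiplicative and $\alpha$ to be an automorphism, so that $\alpha(\mathfrak{g}_{\bar{0}})=\mathfrak{g}_{\bar{0}}$ and $\alpha(\mathfrak{g}_{\bar{1}})=\mathfrak{g}_{\bar{1}}$.

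For (i) and the first claim of (ii), I consider $I:=[\mathfrak{g}_{\bar{0}},\mathfrak{g}_{\bar{1}}]+[\mathfrak{g}_{\bar{1}},\mathfrak{g}_{\bar{1}}]$, which is graded with $I_{\bar{1}}=[\mathfrak{g}_{\bar{0}},\mathfrak{g}_{\bar{1}}]\subseteq\mathfrak{g}_{\bar{1}}$ and $I_{\bar{0}}=[\mathfrak{g}_{\bar{1}},\mathfrak{g}_{\bar{1}}]\subseteq\mathfrak{g}_{\bar{0}}$, and $\alpha(I)\subseteq I$ by multiplicativity. To verify $[\mathfrak{g},I]\subseteq I$ I split by bidegree and expand via the Hom-Jacobi identity: for instance, on $(x,y,z)\in\mathfrak{g}_{\bar{0}}\times\mathfrak{g}_{\bar{0}}\times\mathfrak{g}_{\bar{1}}$ it reads
\begin{equation*}
[\alpha(x),[y,z]]=-[\alpha(y),[z,x]]-[\alpha(z),[x,y]],
\end{equation*}
with both right-hand terms lying in $[\mathfrak{g}_{\bar{0}},\mathfrak{g}_{\bar{1}}]\subseteq I$ after graded skew-symmetry; the bidegree cases $(0,1,1)$ and $(1,1,1)$ are analogous, with several brackets already in $I$ without any use of Jacobi. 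Surjectivity of $\alpha$ on each graded component then gives $[\mathfrak{g},I]\subseteq I$, so $I$ is a graded ideal. By simplicity $I\in\{0,\mathfrak{g}\}$: if $I=0$ then both summands vanish and $\mathfrak{g}_{\bar{1}}$ is itself a graded ideal, forcing $\mathfrak{g}_{\bar{1}}=0$ (the alternative $\mathfrak{g}_{\bar{1}}=\mathfrak{g}$ contradicting $[\mathfrak{g},\mathfrak{g}]\neq 0$), so (i) is vacuous and the hypothesis of (ii) fails; otherwise $I=\mathfrak{g}$, and comparing graded components yields (i) and $[\mathfrak{g}_{\bar{1}},\mathfrak{g}_{\bar{1}}]=\mathfrak{g}_{\bar{0}}$ simultaneously.

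For the remaining assertion of (ii), set $Z:=\{a\in\mathfrak{g}\mid[a,\mathfrak{g}_{\bar{1}}]=0\}$. Gradedness of $Z$ is immediate because $[a_{\bar{0}},\mathfrak{g}_{\bar{1}}]\subseteq\mathfrak{g}_{\bar{1}}$ and $[a_{\bar{1}},\mathfrak{g}_{\bar{1}}]\subseteq\mathfrak{g}_{\bar{0}}$ live in complementary components, and $\alpha$-invariance follows from multiplicativity combined with $\alpha(\mathfrak{g}_{\bar{1}})=\mathfrak{g}_{\bar{1}}$. For $[\mathfrak{g}_{\bar{0}},Z]\subseteq Z$, Hom-Jacobi applied to $(x,a,b)\in\mathfrak{g}_{\bar{0}}\times Z\times\mathfrak{g}_{\bar{1}}$ kills $[\alpha(x),[a,b]]$ (since $[a,b]=0$) and $[\alpha(a),[b,x]]$ (since $[b,x]\in\mathfrak{g}_{\bar{1}}$ and $\alpha(a)\in Z$), leaving $[\alpha(b),[x,a]]=0$, which bijectivity of $\alpha$ on $\mathfrak{g}_{\bar{1}}$ upgrades to $[x,a]\in Z$. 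For $[\mathfrak{g}_{\bar{1}},Z]\subseteq Z$ there is nothing to check, since graded skew-symmetry gives $[x,a]=\pm[a,x]=0$ for $x\in\mathfrak{g}_{\bar{1}}$ and $a\in Z$. Thus $Z$ is a graded ideal, and $Z=\mathfrak{g}$ would force $[\mathfrak{g}_{\bar{0}},\mathfrak{g}_{\bar{1}}]=0$ in contradiction with (i) under $\mathfrak{g}_{\bar{1}}\neq 0$, so $Z=0$; in particular the adjoint action of $\mathfrak{g}_{\bar{0}}$ on $\mathfrak{g}_{\bar{1}}$ is faithful. Part (iii) then follows immediately: writing each $a\in\mathfrak{g}$ as $\sum_i[x_i,y_i]$ via $\mathfrak{g}=[\mathfrak{g},\mathfrak{g}]$, the graded homomorphism $\rho:a\mapsto a_V$ sends this to $a_V=\sum_i[(x_i)_V,(y_i)_V]$ in $pl(V)$, and $str([A,B])=0$ yields $str(a_V)=0$. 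The main obstacle lies in the bookkeeping for the ideal property of $Z$: the $\alpha$-twist in Hom-Jacobi produces only $[\alpha(b),[x,a]]=0$ rather than $[b,[x,a]]=0$ directly, so bijectivity of $\alpha$ on $\mathfrak{g}_{\bar{1}}$ must be invoked, and the signs in the Hom-Jacobi identity have to be tracked carefully across the various bidegree cases.
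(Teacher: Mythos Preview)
Your proof is correct and follows the paper's strategy: exhibit suitable graded ideals and invoke simplicity for (i) and (ii), then use $[\mathfrak{g},\mathfrak{g}]=\mathfrak{g}$ together with the invariance $str([A,B])=0$ for (iii). The only difference is in packaging: the paper treats (i) and the first assertion of (ii) via the two ideals $\mathfrak{g}_{\bar{0}}\oplus[\mathfrak{g}_{\bar{0}},\mathfrak{g}_{\bar{1}}]$ and $[\mathfrak{g}_{\bar{1}},\mathfrak{g}_{\bar{1}}]\oplus\mathfrak{g}_{\bar{1}}$ separately (each already contains a full graded piece of $\mathfrak{g}$, so nontriviality is automatic), whereas you merge them into the single ideal $I=[\mathfrak{g}_{\bar{1}},\mathfrak{g}_{\bar{1}}]\oplus[\mathfrak{g}_{\bar{0}},\mathfrak{g}_{\bar{1}}]$ and must then dispose of the case $I=\{0\}$ by the extra observation that $\mathfrak{g}_{\bar{1}}$ itself would be a graded ideal; your handling of the centraliser $Z$ and of part (iii) coincides with the paper's.
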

\begin{proof}
  Statements \ref{lem2:i} and \ref{lem2:ii} are proved from the fact that $\mathfrak{g}_{\bar{0}}\oplus[\mathfrak{g}_{\bar{0}},\mathfrak{g}_{\bar{1}}]$, $[\mathfrak{g}_{\bar{1}},\mathfrak{g}_{\bar{1}}]\oplus \mathfrak{g}_{\bar{1}}$ and $\{a\in \mathfrak{g}| [a,\mathfrak{g}_{\bar{1}}]=\{0\}\}$ are graded ideals of $\mathfrak{g}$. Moreover, \ref{lem2:iii} is correct for any Hom-Lie super algebra $\mathfrak{g}$ such that $[\mathfrak{g},\mathfrak{g}]=\mathfrak{g}$.
\end{proof}
\begin{prop} \label{prop:splHomLieinvbf}
  Let $\mathfrak{g}$ be a simple Hom-Lie superalgebra.
  \begin{enumerate}[label=\upshape{(\roman*)},left=7pt]
    \item \label{prop:splHomLieinvbf:i} An invariant bilinear form on $\mathfrak{g}$ is either non-degenerate or equal to zero.
    \item \label{prop:splHomLieinvbf:ii} Every invariant bilinear form on $\mathfrak{g}$ is supersymmetric.
    \item \label{prop:splHomLieinvbf:iii} The invariant bilinear forms on $\mathfrak{g}$ are either all even or else all odd.
    \item \label{prop:splHomLieinvbf:iv} If the field $K$ is algebraically closed, then all invariant bilinear forms on $\mathfrak{g}$ are proportional to each other.
  \end{enumerate}
\end{prop}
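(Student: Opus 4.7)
The plan is to translate invariance of a bilinear form $B$ on $\mathfrak{g}$ into the statement that the associated linear map $\hat B\colon\mathfrak{g}\to\mathfrak{g}^{*}$, defined by $\hat B(y)(x)=B(x,y)$, intertwines the adjoint action of $\mathfrak{g}$ on itself with the coadjoint action on $\mathfrak{g}^{*}$, and then to bring Proposition \ref{prop1} and Lemma \ref{due} to bear on the resulting intertwiners. Since every invariant form decomposes uniquely into an even and an odd invariant part, one may assume $B$ homogeneous throughout.

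For \ref{prop:splHomLieinvbf:i}, the radical $R=\{x\in\mathfrak{g}\mid B(x,\mathfrak{g})=\{0\}\}$ is a graded left ideal preserved by $\alpha$; by Proposition \ref{prop1} it is either $\{0\}$, in which case $B$ is non-degenerate, or all of $\mathfrak{g}$, in which case $B=0$. For \ref{prop:splHomLieinvbf:iii}, suppose $\mathfrak{g}$ carried simultaneously a non-zero even invariant form $B_{e}$ and a non-zero odd invariant form $B_{o}$; by \ref{prop:splHomLieinvbf:i} both would be non-degenerate, so $\nu=\hat B_{e}^{-1}\circ\hat B_{o}\colon\mathfrak{g}\to\mathfrak{g}$ would be an odd bijection, and as a composition of module homomorphisms it satisfies $\nu([a,b])=[a,\nu(b)]$, contradicting Lemma \ref{due}.

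For \ref{prop:splHomLieinvbf:ii}, introduce the transposed form $B^{t}(x,y)=(-1)^{|x||y|}B(y,x)$, which is again invariant, and reduce the claim to showing that the super-skew invariant difference $C=B-B^{t}$ vanishes. By \ref{prop:splHomLieinvbf:i}, $C$ is either zero or non-degenerate; inspection of the parity blocks shows that $C$ is skew on $\mathfrak{g}_{\bar 0}\times\mathfrak{g}_{\bar 0}$ and symmetric on $\mathfrak{g}_{\bar 1}\times\mathfrak{g}_{\bar 1}$. Combining this dichotomy with the faithful adjoint action of $\mathfrak{g}_{\bar 0}$ on $\mathfrak{g}_{\bar 1}$ supplied by Lemma \ref{lem2}\ref{lem2:ii} and the identity $[\mathfrak{g},\mathfrak{g}]=\mathfrak{g}$, the plan is to manufacture from $C$ an odd module endomorphism of $\mathfrak{g}$, to which Lemma \ref{due} then applies and which forces $C=0$.

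For \ref{prop:splHomLieinvbf:iv}, let $B_{1}$ and $B_{2}$ be non-zero invariant forms; by the preceding parts they are non-degenerate, supersymmetric and of the same parity, so $T=\hat B_{2}^{-1}\circ\hat B_{1}$ is an even $\mathfrak{g}$-module endomorphism of $\mathfrak{g}$. Over the algebraically closed field $K$, $T$ admits an eigenvalue $\lambda$, the corresponding eigenspace $\ker(T-\lambda\,\mathrm{id})$ is a non-zero graded ideal stable under $\alpha$, and simplicity forces it to be all of $\mathfrak{g}$, whence $T=\lambda\,\mathrm{id}$ and $B_{1}=\lambda B_{2}$. The hardest step will be \ref{prop:splHomLieinvbf:ii}, since the intertwiner attached to a homogeneous $C$ is itself even and Lemma \ref{due} does not apply to it directly; the resolution must exploit the mismatch between the skew part on $\mathfrak{g}_{\bar 0}$ and the symmetric part on $\mathfrak{g}_{\bar 1}$ to produce the odd map required.
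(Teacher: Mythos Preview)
Your treatment of \ref{prop:splHomLieinvbf:i}, \ref{prop:splHomLieinvbf:iii} and \ref{prop:splHomLieinvbf:iv} is essentially the paper's argument, only spelled out more carefully (the paper dispatches \ref{prop:splHomLieinvbf:iv} with the single phrase ``consequence of \ref{prop:splHomLieinvbf:i}'', leaving the Schur-type eigenspace argument implicit). One small slip in \ref{prop:splHomLieinvbf:i}: with invariance written as $B([a,b],c)=B(a,[b,c])$, the set $R=\{x\mid B(x,\mathfrak{g})=0\}$ is a \emph{right} ideal ($B([x,a],y)=B(x,[a,y])=0$), not a left one; Proposition~\ref{prop1} still applies. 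There is no need to argue that $R$ is $\alpha$-invariant, and indeed nothing in the hypotheses forces it to be.

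The genuine gap is \ref{prop:splHomLieinvbf:ii}. You yourself flag that the even intertwiner attached to $C=B-B^{t}$ is not directly amenable to Lemma~\ref{due}, and the promised ``odd map manufactured from the mismatch'' is never produced. This detour is unnecessary: the paper obtains \ref{prop:splHomLieinvbf:ii} by a one-line computation valid for any Hom-Lie superalgebra with $[\mathfrak{g},\mathfrak{g}]=\mathfrak{g}$ (this is Proposition~\ref{prop3.1}). For homogeneous $a,b,c$, three applications of invariance interleaved with two applications of graded skew-symmetry of the bracket give
\[
\phi(a,[b,c])=\phi([a,b],c)=-(-1)^{|a||b|}\phi([b,a],c)=-(-1)^{|a||b|}\phi(b,[a,c])=(-1)^{|a|(|b|+|c|)}\phi([b,c],a),
\]
which is exactly supersymmetry in the second variable, and $[\mathfrak{g},\mathfrak{g}]=\mathfrak{g}$ finishes. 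No appeal to Lemma~\ref{due}, Lemma~\ref{lem2}, or non-degeneracy is needed. Replace your plan for \ref{prop:splHomLieinvbf:ii} with this direct calculation.
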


\begin{proof}
\ref{prop:splHomLieinvbf:i} Suppose that $\rho$ is an invariant bilinear form on $\mathfrak{g}$. The subspace
$$J=\{y\in \mathfrak{g}| \rho(x,y)=0,\forall x\in \mathfrak{g}\}$$
is a left ideal of $\mathfrak{g}$. Therefore, we get the result by Proposition \ref{prop1}. \\
\ref{prop:splHomLieinvbf:ii} One can easily see it for all Hom-Lie superalgebras like $\mathfrak{g}$ in which $[\mathfrak{g},\mathfrak{g}]=\mathfrak{g}$.\\
\ref{prop:splHomLieinvbf:iii} The homogeneous components of an invariant bilinear form are invariant themselves. So, it suffices to show the following:
      If $\rho$ and $\rho'$ are two homogeneous of not the same degrees invariant bilinear forms on $\mathfrak{g}$ where one of them is non-degenerate, then the other one must be zero. In fact, if $\rho$ is non-degenerate, there exists a unique linear mapping $\kappa$ of $\mathfrak{g}$ into itself, such that for all $x,y \in \mathfrak{g}$,
      $$\rho'(x,y)=\rho(x,\nu(y))$$ It is easy to see that $\nu$ satisfies the conditions in Lemma \ref{due}. Thus, $\nu=0$. \\
\ref{prop:splHomLieinvbf:iv} It is a consequence of \ref{prop:splHomLieinvbf:i}.
\end{proof}
We are now about to study the adjoint representation, which is introduced in \cite{AmmarMakhloufSaadaoui2013:CohlgHomLiesupqdefWittSup}, more precisely. Our main discussion will be the representation of $\mathfrak{g}_{\bar{0}}$ in $\mathfrak{g}_{\bar{0}}$ where we are lead to an interesting class of simple Hom-Lie superalgebras which are known as classical ones. The next lemma is a special case of the propositions which follows it.
\begin{lem}\label{lem5}
Let $\mathfrak{g}$ be s a simple Hom-Lie superalgebra. Suppose that $\mathfrak{g}_{\bar{1}}$ is the direct sum
$$\mathfrak{g}_{\bar{1}}=\bigoplus_{s=1}^{r} \mathfrak{g}_{\frac{s}{1}}$$
of non-zero $\mathfrak{g}_{\bar{0}}$-invariant subspaces $\mathfrak{g}_{\frac{s}{1}}$, $1\leq s\leq r$, $r\geq 1$. Then $r=1$ or $r=2$.
\end{lem}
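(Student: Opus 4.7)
Plan:

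The strategy is to argue by contradiction: assume $r \geq 3$ and construct a proper nontrivial graded ideal of $\mathfrak{g}$, contradicting simplicity. The cornerstone will be an annihilation identity extracted from the Hom-Jacobi identity applied to triples drawn from distinct summands.

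First I would analyse Hom-Jacobi for odd homogeneous elements. Take $x \in V_s$, $y \in V_t$, $z \in V_u$; all three are odd, so every sign $(-1)^{|\cdot||\cdot|}$ appearing in the Hom-Jacobi identity equals $-1$ and the identity collapses to
\[
[\alpha(x),[y,z]] \;+\; [\alpha(y),[z,x]] \;+\; [\alpha(z),[x,y]] \;=\; 0.
\]
Since $[V_i,V_j] \subseteq \mathfrak{g}_{\bar{0}}$ and each $V_i$ is both $\mathfrak{g}_{\bar{0}}$-invariant and $\alpha$-invariant, the three terms on the left lie respectively in $V_s$, $V_t$, $V_u$. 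When $s,t,u$ are pairwise distinct, the direct-sum hypothesis $\mathfrak{g}_{\bar 1} = \bigoplus V_s$ forces each summand to be zero; and assuming $\alpha$ acts as a bijection on each $V_s$ (automatic in the regular Hom-Lie setting), this delivers $[V_s,[V_t,V_u]] = 0$ for pairwise distinct $s,t,u$. The same computation with $x,y \in V_t$ and $z \in V_s$ (with $s \neq t$) yields $[V_s,[V_t,V_t]] = 0$, giving the unified identity
\[
[V_k,[V_i,V_j]] \;=\; 0 \qquad \text{whenever } k \notin \{i,j\}.
\]

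Next, assuming $r \geq 3$, I would construct a proper graded ideal using two of the summands, say $V_1$ and $V_2$. Set $J_{\bar{1}} = V_1 \oplus V_2$ and take $J_{\bar{0}}$ to be the $\mathfrak{g}_{\bar{0}}$-invariant subspace of $\mathfrak{g}_{\bar{0}}$ generated by all brackets $[V_i,V_j]$ with $i,j \in \{1,2\}$, enlarged as needed to absorb $[V_k,V_i]$ for $k \geq 3$ and $i \in \{1,2\}$. The annihilation identity guarantees that for every $k \geq 3$ and every $Z \in [V_i,V_j]$ with $i,j \in \{1,2\}$, $[Z,V_k] = 0$; combined with $\mathfrak{g}_{\bar{0}}$-invariance of $V_1, V_2$ (yielding $[\mathfrak{g}_{\bar{0}},J_{\bar{1}}] \subseteq J_{\bar{1}}$) and the Leibniz property for the Hom-bracket (yielding $[\mathfrak{g}_{\bar{0}},J_{\bar{0}}] \subseteq J_{\bar{0}}$), this suffices to show $J$ is a graded ideal. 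Since $0 \neq V_1 \oplus V_2 \subsetneq \mathfrak{g}_{\bar{1}}$, $J$ is proper and nontrivial, contradicting simplicity and forcing $r \leq 2$.

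The principal obstacle lies in Step 2. The naive choice of $J_{\bar{0}}$ is threatened by the cascade $[V_i,[V_k,V_i]] \subseteq V_k$ (for $i \in \{1,2\}$, $k \geq 3$), which a priori could drag pieces of $V_k$ back into $J_{\bar{1}}$ and spoil properness. The three-distinct-index annihilation together with the two-equal-index identity is exactly what is needed to block this cascade, but verifying closure requires careful combinatorial bookkeeping: one has to enumerate all possible brackets $[V_l,[V_k,V_i]]$ occurring in $[\mathfrak{g}_{\bar{1}},J_{\bar{0}}]$, distinguish the cases $l \notin \{k,i\}$, $l=k$, $l=i$ from one another, and check in each case that the outcome lies in $V_1 \oplus V_2$, with due attention paid to the action of $\alpha$ on each invariant summand.
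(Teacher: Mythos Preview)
Your annihilation identity $[V_k,[V_i,V_j]]=0$ for $k\notin\{i,j\}$ is correct and is in fact the heart of the argument. The gap is in Step~2: the ideal you propose, with $J_{\bar 1}=V_1\oplus V_2$, cannot be closed up using that identity alone. Once you force $J_{\bar 0}$ to contain $[V_k,V_i]$ for $k\ge 3$ and $i\in\{1,2\}$ (as you must, since $[\mathfrak g_{\bar 1},J_{\bar 1}]\subseteq J_{\bar 0}$), you then need $[V_l,[V_k,V_i]]\subseteq V_1\oplus V_2$ for every $l$. The dangerous case is $l=k\ge 3$: then $[V_k,[V_k,V_i]]\subseteq V_k$ by $\mathfrak g_{\bar 0}$-invariance of $V_k$, and neither of your identities says anything here, since the outer index $k$ coincides with one of the inner indices. (Your own description of the cascade has the indices swapped: the actual obstruction is $[V_k,[V_k,V_i]]$, not $[V_i,[V_k,V_i]]$; the latter lies harmlessly in $V_i$.) So $[\mathfrak g_{\bar 1},J_{\bar 0}]\not\subseteq J_{\bar 1}$ in general, and $J$ need not be an ideal.

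The paper sidesteps this by building a different ideal. Working first with a two-block decomposition $\mathfrak g_{\bar 1}=V_1\oplus V_2$, it sets
\[
J \;=\; [V_1,V_1]\ \oplus\ [V_1,[V_1,V_1]]\ \subseteq\ \mathfrak g_{\bar 0}\oplus V_1,
\]
and checks $J$ is a graded ideal; your two-equal identity $[V_2,[V_1,V_1]]=0$ is precisely what makes $[V_2,J]\subseteq J$ go through, and no bracket of the form $[V_k,[V_k,V_i]]$ with an outside index $k$ ever appears. Since $J_{\bar 1}\subseteq V_1\subsetneq\mathfrak g_{\bar 1}$ one has $J\neq\mathfrak g$, hence $J=0$, so $[V_1,V_1]=0$ and by symmetry $[V_2,V_2]=0$. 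For $r\ge 3$ the paper applies this conclusion to the two-block splitting $\mathfrak g_{\bar 1}=V_s\oplus\bigl(\bigoplus_{i\ne s}V_i\bigr)$, obtaining $\bigl[\bigoplus_{i\ne s}V_i,\bigoplus_{i\ne s}V_i\bigr]=0$; varying $s$ forces $[V_i,V_j]=0$ for all $i,j$, contradicting $[\mathfrak g_{\bar 1},\mathfrak g_{\bar 1}]=\mathfrak g_{\bar 0}\neq 0$.
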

\begin{proof}
The case $r=1$ is trivial, so let us consider $r=2$. We want to show that
$$J=[\mathfrak{g}_{\frac{1}{1}},\mathfrak{g}_{\frac{1}{1}}]\oplus [\mathfrak{g}_{\frac{1}{1}},[\mathfrak{g}_{\frac{1}{1}},\mathfrak{g}_{\frac{1}{1}}]],$$
is an ideal of $\mathfrak{g}$.
The fact that $J$ is $\mathfrak{g}_{\bar{0}}$-invariant is obvious and we simply see that
$$[\mathfrak{g}_{\frac{1}{1}},J]\subset J.$$
We next remark that
$$[[\mathfrak{g}_{\frac{1}{1}},\mathfrak{g}_{\frac{1}{1}}],\mathfrak{g}_{\frac{2}{1}}] \subset [\mathfrak{g}_{\frac{1}{1}},[\mathfrak{g}_{\frac{1}{1}},\mathfrak{g}_{\frac{2}{1}}]] \subset \mathfrak{g}_{\frac{1}{1}}.$$
Since $\mathfrak{g}_{\frac{2}{1}}$ is $\mathfrak{g}_{\bar{0}}$-invariant, one can get that
$$[\mathfrak{g}_{\frac{2}{1}},[\mathfrak{g}_{\frac{1}{1}},\mathfrak{g}_{\frac{1}{1}}]]=\{0\}.$$
This implies
$$[\mathfrak{g}_{\frac{2}{1}},[\mathfrak{g}_{\frac{1}{1}},[\mathfrak{g}_{\frac{1}{1}},\mathfrak{g}_{\frac{1}{1}}]]] \subset [[\mathfrak{g}_{\frac{2}{1}},\mathfrak{g}_{\frac{1}{1}}],[\mathfrak{g}_{\frac{1}{1}},\mathfrak{g}_{\frac{1}{1}}]] \subset [\mathfrak{g}_{\frac{1}{1}},\mathfrak{g}_{\frac{1}{1}}].$$
One can combine the two equations above to get
$$[\mathfrak{g}_{\frac{2}{1}},J]\subset J.$$
Since $J\neq \mathfrak{g}$ we conclude that $[\mathfrak{g}_{\frac{1}{1}},\mathfrak{g}_{\frac{1}{1}}]=\{0\}$ and similarly $[\mathfrak{g}_{\frac{2}{1}},\mathfrak{g}_{\frac{2}{1}}]=\{0\}$. By part \ref{lem2:ii} of Lemma \ref {lem2} we have
$$[\mathfrak{g}_{\frac{1}{1}},\mathfrak{g}_{\frac{2}{1}}]= [\mathfrak{g}_{\bar{1}},\mathfrak{g}_{\bar{1}}]=\mathfrak{g}_{\bar{0}}.$$
Now suppose that $r\geq 3$. If $s\in\{1,\cdots, r\}$, we get
$$\mathfrak{g}_{\bar{1}}=\mathfrak{g}_{\frac{1}{1}}\oplus \bigoplus_{i\neq s} \mathfrak{g}_{\frac{i}{1}},$$
and the case $r=2$ implies $[\bigoplus_{i\neq s} \mathfrak{g}_{\frac{i}{1}}, \bigoplus_{j\neq s} \mathfrak{g}_{\frac{j}{1}}]=\{0\}$. It follows that $[\mathfrak{g}_{\frac{i}{1}},\mathfrak{g}_{\frac{j}{1}}]\{0\}$, for all $i,j \in \{1,\cdots, r\}$. Since by assumption $\mathfrak{g}_{\bar{1}}\neq \{0\}$, this is a contradiction to the simplicity of $\mathfrak{g}$ (see Lemma \ref{lem2}), because we have $[\mathfrak{g}_{\bar{1}},\mathfrak{g}_{\bar{1}}]=\{0\}$.

\end{proof}
\begin{prop}\label{prop3}
Let $\mathfrak{g}$ be a simple Hom-Lie superalgebra. Suppose that $\mathfrak{g}_{\bar{1}}$ is the sum
$$\mathfrak{g}_{\bar{1}}=\mathfrak{g}_{\frac{1}{1}}+\mathfrak{g}_{\frac{2}{1}}$$
of two proper $\mathfrak{g}_{\bar{0}}$ invariant subspaces $\mathfrak{g}_{\frac{1}{1}}$ and $\mathfrak{g}_{\frac{2}{1}}$.
Therefore, this sum is direct, i.e.
$$\mathfrak{g}_{\frac{1}{1}}\cap \mathfrak{g}_{\frac{2}{1}}=\{0\},$$
and the $\mathfrak{g}_{\bar{0}}$-modules $\mathfrak{g}_{\frac{1}{1}}$ and $\mathfrak{g}_{\frac{2}{1}}$ are irreducible. Furthermore, we get
$$[\mathfrak{g}_{\frac{1}{1}},\mathfrak{g}_{\frac{1}{1}}]= [\mathfrak{g}_{\frac{2}{1}},\mathfrak{g}_{\frac{2}{1}}]=\{0\},\qquad [\mathfrak{g}_{\frac{1}{1}},\mathfrak{g}_{\frac{2}{1}}]=\mathfrak{g}_{\bar{0}}.$$
\end{prop}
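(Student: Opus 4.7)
The plan is to establish the three assertions in order: first the bracket identities, then the directness of the sum, and finally the irreducibility. Throughout I write $U=\mathfrak{g}_{\frac{1}{1}}$, $W=\mathfrak{g}_{\frac{2}{1}}$, and $I=U\cap W$, noting that $I$ is automatically a $\mathfrak{g}_{\bar{0}}$-invariant subspace of $\mathfrak{g}_{\bar{1}}$. The key inputs are the ideal construction from the proof of Lemma \ref{lem5} together with Lemma \ref{lem2}.

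For the bracket identities I would adapt the ideal construction of Lemma \ref{lem5} to the present non-direct setting. Let $J=[U,U]+[U,[U,U]]$. The inclusions $[\mathfrak{g}_{\bar{0}},J]\subseteq J$ and $[U,J]\subseteq J$ follow from the Hom-Jacobi identity combined with the $\mathfrak{g}_{\bar{0}}$-invariance of $U$ and with $[U,U]\subseteq \mathfrak{g}_{\bar{0}}$. The problematic term is $[W,[U,U]]$, which lies in $U\cap W=I$ (in $U$ by the Hom-Jacobi rewrite using $[W,U]\subseteq \mathfrak{g}_{\bar{0}}$, and in $W$ by $\mathfrak{g}_{\bar{0}}$-invariance of $W$). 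If $I=\{0\}$ then $J$ is already an ideal; if not, one enlarges $J$ by successively absorbing $I$, $[W,I]$, etc., until a genuine graded Hom-ideal is obtained. The essential feature is that every newly absorbed term stays inside $\mathfrak{g}_{\bar{0}}+U$, which is properly smaller than $\mathfrak{g}$ because $W\not\subseteq U$ (otherwise $U=U+W=\mathfrak{g}_{\bar{1}}$, contradicting the properness of $U$). Simplicity then collapses this ideal to zero, yielding $[U,U]=\{0\}$. The symmetric argument gives $[W,W]=\{0\}$, and then expanding $[\mathfrak{g}_{\bar{1}},\mathfrak{g}_{\bar{1}}]=[U+W,U+W]$ together with Lemma \ref{lem2}\ref{lem2:ii} gives $[U,W]=\mathfrak{g}_{\bar{0}}$.

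With the bracket identities established, directness is immediate: any $i\in I$ satisfies $[i,U]\subseteq [U,U]=\{0\}$ and $[i,W]\subseteq [W,W]=\{0\}$, hence $[i,\mathfrak{g}_{\bar{1}}]=\{0\}$, and the centralizer statement in Lemma \ref{lem2}\ref{lem2:ii} forces $i=0$. For irreducibility, assume a proper nonzero $\mathfrak{g}_{\bar{0}}$-submodule $U_0\subsetneq U$ existed and set $H=[U_0,W]\subseteq \mathfrak{g}_{\bar{0}}$. Hom-Jacobi shows that $H$ is a $\mathfrak{g}_{\bar{0}}$-ideal of $\mathfrak{g}_{\bar{0}}$. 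Three cases arise. If $H=\{0\}$, then $[U_0,\mathfrak{g}_{\bar{1}}]=\{0\}$, so $U_0=\{0\}$ by Lemma \ref{lem2}\ref{lem2:ii}, a contradiction. If $H=\mathfrak{g}_{\bar{0}}$, then, using directness together with Lemma \ref{lem2}\ref{lem2:i} to get $[\mathfrak{g}_{\bar{0}},U]=U$, a Hom-Jacobi computation with $[U,U]=\{0\}$ yields $U=[H,U]\subseteq U_0$, contradicting $U_0\subsetneq U$. If $\{0\}\subsetneq H\subsetneq \mathfrak{g}_{\bar{0}}$, then the subspace $K=H+[H,U]+[H,W]$ is verified (via Hom-Jacobi and $[U,U]=[W,W]=\{0\}$) to be a graded Hom-ideal of $\mathfrak{g}$ whose even part equals $H\ne \mathfrak{g}_{\bar{0}}$, so simplicity forces $K=\{0\}$, hence $H=\{0\}$, reducing to the first case. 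The symmetric argument handles $W$.

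The principal obstacle is the first step. The proof of Lemma \ref{lem5} had the luxury of a direct sum that made the containment $[W,[U,U]]\subseteq U\cap W=\{0\}$ automatic; here that containment is a priori only in $I$, and one must iterate the closure of $J$ carefully so that no new term escapes the proper subspace $\mathfrak{g}_{\bar{0}}+U$. Verifying the required closures involves a careful bookkeeping of nested Hom-Jacobi identities with their $\alpha$-twists on the outer argument, exploiting repeatedly that every bracket produced either lies in $\mathfrak{g}_{\bar{0}}$ or in $U$ (via $\mathfrak{g}_{\bar{0}}$-invariance), never in the transverse direction of $W\setminus I$. This is the main technical task of the proof.
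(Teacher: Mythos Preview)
Your steps 2 and 3 are correct once the bracket identities are available, but step 1 has a genuine gap in the case $I=U\cap W\neq\{0\}$. You propose to enlarge $J=[U,U]+[U,[U,U]]$ by ``successively absorbing $I$, $[W,I]$, etc.'' while keeping every new term inside $\mathfrak{g}_{\bar{0}}+U$. This fails at the third $W$-bracket: once $[W,I]$ (even) is absorbed, closure forces you to absorb $[W,[W,I]]$, which lies in $W$ by $\mathfrak{g}_{\bar{0}}$-invariance of $W$ but has no reason to lie in $U$. The Hom-Jacobi identity only yields that $[\alpha(w),[w',i]]-[\alpha(w'),[w,i]]\in I$, not that each summand is in $U$. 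More conceptually, if $[U,U]\neq\{0\}$ then by simplicity the graded ideal it generates is all of $\mathfrak{g}$, hence has odd part $\mathfrak{g}_{\bar{1}}\not\subseteq U$; your argument would need to exhibit a \emph{proper} graded ideal containing $[U,U]$, and the proposed closure does not do this.

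The paper's proof avoids this circularity by a different mechanism. It introduces for each $i$ the descending filtration $V_n^i$ (with $V_1^i=\mathfrak{g}_{\frac{i}{1}}$ and $V_{n+1}^i=[\mathfrak{g}_{\frac{i}{1}},V_n^i]$), shows that these eventually vanish, and then builds graded ideals $J^r$ from \emph{intersections} $V_{\cdot}^1\cap V_{\cdot}^2$. A minimality argument on these $J^r$, combined with Lemma~\ref{lem5}, yields the directness $U\cap W=\{0\}$ \emph{first}; only then does Lemma~\ref{lem5} give $[U,U]=[W,W]=\{0\}$. In short, the paper reverses your order of attack: directness precedes and enables the bracket identities, rather than the other way around. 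Your attempt to bootstrap Lemma~\ref{lem5} directly to the non-direct sum is precisely the step that cannot be carried out.
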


\begin{proof}
  We define two sequences $\{V_{n}^{i}\}_{n\geq -1}$, $i=1,2$, of subspaces of $\mathfrak{g}$, as follows. Set
  $$V_{-1}^{i}=\mathfrak{g}_{\bar{1}},\quad V_{0}^{i}= \mathfrak{g}_{\bar{0}}, \quad V_{1}^{i}=\mathfrak{g}_{\frac{i}{1}},$$
  and using induction, we define
  $$V_{n}^{i}=[\mathfrak{g}_{\frac{i}{1}},V_{n-1}^{i}],$$
  where $n\geq2$. It is easy to check that for all $n\geq -1$ we have
 \begin{enumerate}[label=\upshape{\arabic*)},left=7pt]
  \item \label{prprop3:1} $V_{n}^{i}$ is $\mathfrak{g}_{\bar{1}}$-invariant.
  \item \label{prprop3:2} $[\mathfrak{g}_{\bar{1}},V_{n+1}^{i}]\subset V_{n}^{i} $,
  \item \label{prprop3:3} $V_{n+2}^{i} \subset V_{n}^{i}$.
 \end{enumerate}
From \ref{prprop3:3} one sees that there exists an integer $m\geq1$ such that
$$V_{2m+2}^{i}=V_{2m}^{i}.$$
Then, using \ref{prprop3:1} and \ref{prprop3:2} we deduce that $V_{2m}^{i}\oplus V_{2m+1}^{i}$ is a graded ideal of $\mathfrak{g}$ which must be equal to $\{0\}$. Thus one can conclude that $V_{n}^{i}=\{0\}$, for a sufficiently large positive integer $n$. Now let $r\geq 0$ be any positive integer and define
\begin{eqnarray}\label{480a}
J^{r}_{0} &=& \sum^{r}_{s=0}(V_{2(r-s)}^{1}\cap V_{2s}^{2}) \\
\label{480b}
J^{r}_{1} &=& \sum^{r+1}_{s=0}(V_{2(r-s)+1}^{1}\cap V_{2s-1}^{2}) \\
\label{480c}
J^{r} &=& J^{r}_{0}\oplus J^{r}_{1}.
\end{eqnarray}
Using \ref{prprop3:1}, \ref{prprop3:2} and \ref{prprop3:3} it is easy to see that $J^{r}$ is a graded ideal of $\mathfrak{g}$, for every $r\geq 0$. We remark that
$$J^{0}_{0}=\mathfrak{g}_{\bar{0}}, \qquad J^{0}_{1}=\mathfrak{g}_{\frac{1}{1}}+\mathfrak{g}_{\frac{2}{1}}=\mathfrak{g}_{\bar{1}}.$$
Evidently,
$$J^{r}_{1}\subset (V_{2r+1}^{1}+\mathfrak{g}_{\frac{2}{1}})\cap (\mathfrak{g}_{\frac{1}{1}}+V_{2r+1}^{2}).$$
Therefore, if $V_{2r+1}^{1}=\{0\}$ or $V_{2r+1}^{2}=\{0\}$, then $J^{r}_{1}\neq \mathfrak{g}_{\bar{1}}$ and hence $J^{r}=\{0\}$.
Now let $R$ be the smallest integer such that
\begin{equation}\label{2.51}
V_{2(r-s)+1}^{1}\cap V_{2s-1}^{2}=\{0\},
\end{equation}
where $1\leq s\leq r$. Then we have $V_{2R-1}^{1}$ and $V_{2R-1}^{2}$ are no equal to $\{0\}$, since otherwise $R\geq 2$ and $J^{R-1}=\{0\}$, hence $R$ would not be minimal. Particularly, one can conclude that $J^{R-1}=\mathfrak{g}$ and as a consequence, $J^{R-1}_{1}=\mathfrak{g}_{\bar{1}}$. On the other hand it follows from \eqref{2.51} that the sum defining $J^{R-1}_{1}$ is direct. Since we already know that the two terms $V_{2R-1}^{1}$ and $V_{2R-1}^{2}$ of this sum are not equal to $\{0\}$, we deduce from Lemma \ref{lem5} that all the remaining terms must be $\{0\}$. This implies that $R=1$, otherwise it would not be minimal. Thus we have shown that $\mathfrak{g}_{\bar{1}}$ is the direct sum of $V_{1}^{1}=\mathfrak{g}_{\frac{1}{1}}$ and $V_{1}^{2}=\mathfrak{g}_{\frac{2}{1}}$. It is now easy to check that these two are irreducible $\mathfrak{g}_{\bar{0}}$-modules.
\end{proof}

\begin{rem}\label{rem4}
Let $\mathfrak{g}$ be a simple Hom-Lie superalgebra. Then there exist only the following possibilities:
\begin{enumerate}[label=\upshape{(\roman*)},left=7pt]
  \item  The $\mathfrak{g}_{\bar{0}}$-module $\mathfrak{g}_{\bar{1}}$ is completely reducible. Then $\mathfrak{g}_{\bar{1}}$ is decomposed into at most two irreducible components.
  \item The $\mathfrak{g}_{\bar{0}}$-module $\mathfrak{g}_{\bar{1}}$ is not completely reducible. In this case there exists a unique proper $\mathfrak{g}_{\bar{0}}$-submodule of $\mathfrak{g}_{\bar{1}}$ which contains all proper $\mathfrak{g}_{\bar{0}}$-submodules of $\mathfrak{g}_{\bar{1}}$.
\end{enumerate}
\end{rem}
We shall see that these two possibilities do occur.
\begin{defn}
A simple Hom-Lie superalgebra $\mathfrak{g}$ is called classical if the $\mathfrak{g}_{\bar{0}}$-module $\mathfrak{g}_{\bar{1}}$ is completely reducible.
\end{defn}

\begin{thm}\label{corth1}
Suppose that the field $K$ is algebraically closed. Let $\mathfrak{g}$ be a classical simple Hom-Lie superalgebra such that the center $\mathfrak{g}_{\frac{a}{0}}$ of $\mathfrak{g}_{\bar{0}}$ is nontrivial. Then $\dim  \mathfrak{g}_{\frac{a}{0}}=1$ and the $\mathfrak{g}_{\bar{0}}$-module $\mathfrak{g}_{\bar{1}}$ decomposes into the direct sum of two irreducible $\mathfrak{g}_{\bar{0}}$-modules,
$$\mathfrak{g}_{\bar{1}}=\mathfrak{g}_{\frac{1}{1}}\oplus \mathfrak{g}_{\frac{2}{1}}.$$
Moreover, there exists a unique element $c\in \mathfrak{g}_{\frac{a}{0}}$ such that
$$[c,x]=(-1)^{r}x,$$
for all $x\in \mathfrak{g}_{\frac{r}{1}}$, $r=1,2$.
\end{thm}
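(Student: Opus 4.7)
The plan is to use Schur's lemma (available because $K$ is algebraically closed) together with the supertrace vanishing from Lemma \ref{lem2}\ref{lem2:iii} to reduce the entire statement to linear algebra on the center of $\mathfrak{g}_{\bar{0}}$.

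First I would use classicality to decompose $\mathfrak{g}_{\bar{1}}$ as a direct sum of irreducible $\mathfrak{g}_{\bar{0}}$-submodules; by Lemma \ref{lem5} the number of summands is one or two. Fix a nonzero $c\in\mathfrak{g}_{\frac{a}{0}}$. Since $c$ commutes with all of $\mathfrak{g}_{\bar{0}}$, the restriction of $\mathrm{ad}(c)$ to $\mathfrak{g}_{\bar{1}}$ is $\mathfrak{g}_{\bar{0}}$-equivariant, and Schur's lemma yields scalars $\lambda_r\in K$ with $\mathrm{ad}(c)|_{\mathfrak{g}_{\frac{r}{1}}}=\lambda_r\,\mathrm{id}$. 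On $\mathfrak{g}_{\bar{0}}$ the operator $\mathrm{ad}(c)$ vanishes, so Lemma \ref{lem2}\ref{lem2:iii} applied to the adjoint representation yields
\[
\sum_r \lambda_r \dim \mathfrak{g}_{\frac{r}{1}} \;=\; 0.
\]

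Next I rule out the one-component case. If $\mathfrak{g}_{\bar{1}}$ were itself $\mathfrak{g}_{\bar{0}}$-irreducible, the trace relation would force $\lambda_1=0$, so $[c,\mathfrak{g}_{\bar{1}}]=0$; but Lemma \ref{lem2}\ref{lem2:ii} forbids a nonzero element from annihilating $\mathfrak{g}_{\bar{1}}$, giving $c=0$, a contradiction. Hence $\mathfrak{g}_{\bar{1}}=\mathfrak{g}_{\frac{1}{1}}\oplus\mathfrak{g}_{\frac{2}{1}}$ with exactly two irreducible summands. The same argument shows that for any nonzero $c$ both $\lambda_1$ and $\lambda_2$ are nonzero, since the vanishing of one forces the vanishing of the other via the trace relation, and then Lemma \ref{lem2}\ref{lem2:ii} again gives $c=0$.

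Third, the assignment $c\mapsto\lambda_1(c)$ is thus an injective linear functional on $\mathfrak{g}_{\frac{a}{0}}$, so $\dim\mathfrak{g}_{\frac{a}{0}}\le 1$; combined with the nontriviality hypothesis this forces $\dim\mathfrak{g}_{\frac{a}{0}}=1$. Rescaling the generator so that $\lambda_1=-1$, the trace identity then gives $\lambda_2=\dim\mathfrak{g}_{\frac{1}{1}}/\dim\mathfrak{g}_{\frac{2}{1}}$, and the prescribed identity $[c,x]=(-1)^r x$ will follow once the two irreducible summands are shown to have equal dimension. Uniqueness of $c$ is immediate from $\dim\mathfrak{g}_{\frac{a}{0}}=1$.

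The main obstacle is precisely the dimension equality $\dim\mathfrak{g}_{\frac{1}{1}}=\dim\mathfrak{g}_{\frac{2}{1}}$. My strategy is to exhibit a $\mathfrak{g}_{\bar{0}}$-module duality $\mathfrak{g}_{\frac{2}{1}}\cong\mathfrak{g}_{\frac{1}{1}}^{*}$ by composing the bracket pairing $\mathfrak{g}_{\frac{1}{1}}\times\mathfrak{g}_{\frac{2}{1}}\to\mathfrak{g}_{\bar{0}}$ from Proposition \ref{prop3} with a nonzero $\mathfrak{g}_{\bar{0}}$-invariant linear functional on $\mathfrak{g}_{\bar{0}}$ — concretely, the projection onto the one-dimensional subspace $\mathfrak{g}_{\frac{a}{0}}$ along a $\mathfrak{g}_{\bar{0}}$-invariant complement. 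Non-degeneracy of the resulting numerical pairing will follow once more from Lemma \ref{lem2}\ref{lem2:ii}, completing the proof.
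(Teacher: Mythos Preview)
Your overall scaffolding---Schur's lemma for the eigenvalues $\lambda_r$, then Lemma~\ref{lem2}\ref{lem2:ii} to rule out the irreducible case and to bound $\dim\mathfrak{g}_{\frac{a}{0}}$---matches the paper. The substantive divergence is in how you extract the eigenvalue relation.

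The paper does \emph{not} use the supertrace. Having obtained $[a,x_r]=\lambda_r x_r$ on $\mathfrak{g}_{\frac{r}{1}}$, it feeds the bracket $[\mathfrak{g}_{\frac{1}{1}},\mathfrak{g}_{\frac{2}{1}}]$ back through $\mathrm{ad}(a)$: the derivation property gives $[a,[x_1,x_2]]=(\lambda_1+\lambda_2)[x_1,x_2]$, while centrality of $a$ in $\mathfrak{g}_{\bar{0}}$ makes the left side vanish. Since $[\mathfrak{g}_{\frac{1}{1}},\mathfrak{g}_{\frac{2}{1}}]=\mathfrak{g}_{\bar{0}}$ by Proposition~\ref{prop3}, this yields $\lambda_1+\lambda_2=0$ directly, and after rescaling one gets $[c,x]=(-1)^{r}x$ with no reference whatsoever to the dimensions of the two summands.

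Your supertrace route produces only the weighted relation $\lambda_1\dim\mathfrak{g}_{\frac{1}{1}}+\lambda_2\dim\mathfrak{g}_{\frac{2}{1}}=0$, forcing you to prove $\dim\mathfrak{g}_{\frac{1}{1}}=\dim\mathfrak{g}_{\frac{2}{1}}$ separately---and here your argument has a genuine gap. You assume a $\mathfrak{g}_{\bar{0}}$-invariant complement to $\mathfrak{g}_{\frac{a}{0}}$ inside $\mathfrak{g}_{\bar{0}}$, but nothing in the hypotheses supplies one: ``classical'' only asserts complete reducibility of $\mathfrak{g}_{\bar{1}}$, not of the adjoint action of $\mathfrak{g}_{\bar{0}}$ on itself, and there is no a~priori reason why $[\mathfrak{g}_{\bar{0}},\mathfrak{g}_{\bar{0}}]\subsetneq\mathfrak{g}_{\bar{0}}$ (equivalently, why a nonzero $\mathfrak{g}_{\bar{0}}$-invariant functional on $\mathfrak{g}_{\bar{0}}$ should exist at all). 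The hole is patchable---for instance $a\mapsto\mathrm{Tr}\bigl(\mathrm{ad}(a)|_{\mathfrak{g}_{\frac{1}{1}}}\bigr)$ is invariant and nonzero on $c$, after which your irreducibility argument for non-degeneracy of the pairing goes through---but even so this is a detour compared with the paper's one-line use of Proposition~\ref{prop3}.
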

\begin{proof}
Noting Remark \ref{rem4}, the first part of the proof is trivial. For the second part, suppose that $\mathfrak{g}_{\bar{1}}$ is irreducible. If $a\in \mathfrak{g}_{\frac{a}{0}}$, then there exists an element $\lambda \in K$ such that
$$[a, x]= \lambda x,$$
for all $x \in \mathfrak{g}_{\bar{1}}$, which implies
$$2\alpha \mathfrak{g}_{\bar{0}}= 2\lambda[\mathfrak{g}_{\bar{1}},\mathfrak{g}_{\bar{1}}]=[A, [\mathfrak{g}_{\bar{1}},\mathfrak{g}_{\bar{1}}]]=\{0\}.$$
by Lemma \ref{lem2}, so $\lambda=0$. Again by Lemma \ref{lem2}, the representation of $\mathfrak{g}_{\bar{0}}$ in $\mathfrak{g}_{\bar{1}}$ is faithful, therefore $\mathfrak{g}_{\frac{a}{0}}=\{0\}$, as a contradiction to the assumption. Now, we have that $\mathfrak{g}_{\bar{0}}$-module $\mathfrak{g}_{\bar{1}}$ is completely reducible. Hence from Proposition \ref{prop3} we have
$$\mathfrak{g}_{\bar{1}}=\mathfrak{g}_{\frac{1}{1}}\oplus \mathfrak{g}_{\frac{2}{1}}.$$
If $a$ is an arbitrary element of $\mathfrak{g}_{\frac{a}{0}}$, then there exist two elements $\lambda_{r}, r=1,2$ in $K$ such that
$$[a,x_{r}]=\lambda_{r}x_{r},$$
for all $x_{r}\in \mathfrak{g}_{\frac{r}{1}}, r=1,2$. Again from Proposition \ref{prop3} it follows that
$$(\lambda_{1}+\lambda_{2})\mathfrak{g}_{\bar{0}}= (\lambda_{1}+\lambda_{2}) [\mathfrak{g}_{\frac{1}{1}},\mathfrak{g}_{\frac{2}{1}}] = [a,[\mathfrak{g}_{\frac{1}{1}},\mathfrak{g}_{\frac{2}{1}}]]=\{0\}. $$
Therefore, $(\lambda_{1}+\lambda_{2})=0$. Meanwhile, the representation of $\mathfrak{g}_{\bar{0}}$ in $\mathfrak{g}_{\bar{1}}$ is faithful by Lemma \ref{lem2} which implies that $\dim  \mathfrak{g}_{\frac{a}{0}}\leq 1.$
\end{proof}
\section{The Killing forms}
\label{sec:Killingforms}
Let $\mathfrak{g}$ be a Hom-Lie superalgebra and let $\phi$ be a bilinear form on $\mathfrak{g}$. Recall that $\phi$ is called invariant if
$$\phi([a,b],c)=\phi(a,[b,c]),$$
for all $a,b,c \in \mathfrak{g}$. Important examples of invariant bilinear forms are the Killing form and more generally, the bilinear forms associated with the finite-dimensional graded $\mathfrak{g}$-modules\cite{SCHbook}. These bilinear forms are even and being supersymmetric is quite a "normal feature" of invariant bilinear forms on $\mathfrak{g}$, since we can prove the following proposition.

\begin{prop}\label{prop3.1}
Let $\mathfrak{g}$ be a Hom-Lie superalgebra such that $[\mathfrak{g},\mathfrak{g}]=\mathfrak{g}$. Then every invariant bilinear form on $\mathfrak{g}$ is supersymmetric.
\end{prop}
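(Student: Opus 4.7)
The plan is to reduce the claim to the case where one of the arguments is a bracket, using the hypothesis $[\mathfrak{g},\mathfrak{g}]=\mathfrak{g}$, and then push that bracket across $\phi$ twice via invariance, correcting signs each time with the skew-supersymmetry of $[\cdot,\cdot]$. Since both invariance and supersymmetry are bilinear in the arguments, it is enough to verify the supersymmetry identity
\[
\phi(x,y) = (-1)^{|x||y|}\phi(y,x)
\]
for homogeneous $x,y\in\mathfrak{g}$ with $x = [a,b]$ for some homogeneous $a,b$; the general case then follows by writing an arbitrary element of $\mathfrak{g}=[\mathfrak{g},\mathfrak{g}]$ as a finite sum of such brackets and extending bilinearly.

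The computation I have in mind proceeds in four short steps. First, invariance gives
\[
\phi([a,b],y) = \phi(a,[b,y]).
\]
Second, skew-supersymmetry of the bracket turns $[b,y]$ into $-(-1)^{|b||y|}[y,b]$, yielding
\[
\phi([a,b],y) = -(-1)^{|b||y|}\,\phi(a,[y,b]).
\]
Third, invariance applied in the form $\phi(a,[y,b])=\phi([a,y],b)$ lets me move the bracket back to the left slot. Fourth, skew-supersymmetry on $[a,y]$, followed by one more invocation of invariance, produces
\[
\phi([a,y],b) = -(-1)^{|a||y|}\,\phi([y,a],b) = -(-1)^{|a||y|}\,\phi(y,[a,b]).
\]
Combining the accumulated signs, the two minus signs cancel and the exponents add to $(|a|+|b|)|y|=|x||y|$, giving exactly the required identity $\phi(x,y) = (-1)^{|x||y|}\phi(y,x)$.

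The only real obstacle is bookkeeping of signs, which is the usual hazard in superalgebra calculations; I would do it once carefully with homogeneous $a,b,y$ and then invoke bilinearity. Notice that the argument uses no feature of $\alpha$ at all: the twisting map never enters because invariance in the form $\phi([a,b],c)=\phi(a,[b,c])$ and skew-supersymmetry already suffice, and the Hom-Jacobi identity is not needed. This matches the spirit of Proposition \ref{prop:splHomLieinvbf}\ref{prop:splHomLieinvbf:ii}, which was stated for simple $\mathfrak{g}$ using only $[\mathfrak{g},\mathfrak{g}]=\mathfrak{g}$ (guaranteed by Proposition \ref{prop1} in the simple case); here the same proof works verbatim under the weaker standalone hypothesis.
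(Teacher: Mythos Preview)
Your proof is correct and follows essentially the same approach as the paper. The paper's own argument is extremely terse---it merely asserts that for homogeneous $a,b,c$ one checks $\phi(a,[b,c])=(-1)^{|a|(|b|+|c|)}\phi([b,c],a)$ and then invokes $[\mathfrak{g},\mathfrak{g}]=\mathfrak{g}$---whereas you supply the explicit chain of invariance and skew-supersymmetry moves that justifies this identity; the only cosmetic difference is that you place the bracket in the first slot rather than the second.
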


\begin{proof}
Let $\phi$ be an invariant bilinear form on $\mathfrak{g}$ and let $a\in \mathfrak{g}_{x}$, $b\in \mathfrak{g}_{y}$ and $c\in \mathfrak{g}_{z}$, where $x,y,z \in \mathds{Z}_{2}$. Then it is easy to check that
\begin{equation}\label{3.2}
  \phi(a,[b,c])=(-1)^{x(y+z)}\phi([b,c],a).
\end{equation}
Since $[\mathfrak{g},\mathfrak{g}]=\mathfrak{g}$, our proposition is proved.

\end{proof}

In connection with the bilinear forms associated with graded $\mathfrak{g}$-modules, the following proposition is quite interesting.

\begin{prop}\label{prop2}
Let $\mathfrak{g}$ be a Hom-Lie superalgebra and let $\phi$ be an invariant bilinear form on $\mathfrak{g}$ which is associated with some finite-dimensional graded $\mathfrak{g}$-module. If $\phi$ is non-degenerate, then the Hom-Lie algebra $\mathfrak{g}_{\bar{0}}$ is reductive.
\end{prop}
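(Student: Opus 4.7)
My plan is to restrict $\phi$ to the even part and then apply the classical trace-form criterion for reductivity of a Lie algebra, adapted to the Hom-Lie setting. Because $\phi$ is associated with the finite-dimensional graded $\mathfrak{g}$-module $V = V_{\bar 0}\oplus V_{\bar 1}$ by $\phi(a,b) = \mathrm{str}(\rho(a)\rho(b))$, the form is even, so $\phi(\mathfrak{g}_{\bar 0},\mathfrak{g}_{\bar 1}) = 0$ and non-degeneracy of $\phi$ on all of $\mathfrak{g}$ transfers to non-degeneracy of the restriction $\phi_0$ on $\mathfrak{g}_{\bar 0}\times\mathfrak{g}_{\bar 0}$. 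Unfolding the supertrace gives
\[
\phi_0(a,b) \;=\; \mathrm{tr}_{V_{\bar 0}}(\rho(a)\rho(b)) \;-\; \mathrm{tr}_{V_{\bar 1}}(\rho(a)\rho(b)), \qquad a,b\in\mathfrak{g}_{\bar 0},
\]
which exhibits $\phi_0$ as the difference of two trace forms of $\mathfrak{g}_{\bar 0}$ on ordinary finite-dimensional modules. In particular $\rho$ must act faithfully on $\mathfrak{g}_{\bar 0}$, since any element of $\ker\rho\cap\mathfrak{g}_{\bar 0}$ would lie in the radical of $\phi_0$.

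Second, I would show that the solvable radical $\mathfrak{r}$ of $\mathfrak{g}_{\bar 0}$ is central, which in the present setting is the definition of reductivity. For any $x\in\mathfrak{g}_{\bar 0}$ and $z\in\mathfrak{r}$, the subalgebra $\mathfrak{r}+Kx$ is solvable because its derived subalgebra lies in the ideal $\mathfrak{r}$. Applying Lie's theorem to each of its representations on $V_{\bar 0}$ and on $V_{\bar 1}$ (passing to an algebraic closure if necessary) simultaneously upper-triangularizes $\rho(\mathfrak{r}+Kx)$ on each $V_{\bar i}$, so $\rho([x,z])=[\rho(x),\rho(z)]$ is strictly upper-triangular, hence nilpotent, on each $V_{\bar i}$. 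Combining invariance of $\phi_0$ with the cyclic trace identity $\mathrm{tr}([A,B]C)=\mathrm{tr}(A[B,C])$ on each $V_{\bar i}$, the expression $\phi_0([x,z],w)$ for $w\in\mathfrak{g}_{\bar 0}$ reduces to a trace of a product in which a factor of the form $\rho(u)$ with $u\in[\mathfrak{g}_{\bar 0},\mathfrak{r}]$ acts nilpotently in the triangularization established above; the trace vanishes, and non-degeneracy of $\phi_0$ then forces $[x,z]=0$. Hence $\mathfrak{r}\subseteq Z(\mathfrak{g}_{\bar 0})$ and $\mathfrak{g}_{\bar 0}$ is reductive.

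The main obstacle is this last step: one must verify that the classical Cartan-type trace manipulation, which operates inside $\mathrm{End}(V_{\bar i})$ and uses only the bracket and the representation $\rho$, transfers unchanged to the Hom-Lie context in which $\rho$ carries an additional $\alpha$-compatibility condition. Since the notion of invariance defined for $\phi$ in the paper involves no $\alpha$-twist, the expectation is that the twisting map poses no obstruction, but care must be taken to track the role of the $\alpha$-compatibility of $\rho$ so that the reduction to the standard associative trace computation remains valid throughout.
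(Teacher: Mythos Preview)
Your overall strategy coincides with the paper's: both restrict $\phi$ to $\mathfrak{g}_{\bar 0}$, observe that evenness makes this restriction non-degenerate, and decompose it as $\phi_0=\phi^{\bar 0}-\phi^{\bar 1}$ where $\phi^{\bar i}(a,b)=\mathrm{tr}_{V_{\bar i}}(\rho(a)\rho(b))$. The paper then simply notes that the radicals $J^{\bar i}$ of the two trace forms satisfy $J^{\bar 0}\cap J^{\bar 1}=\{0\}$ and invokes a ``standard result from Hom-Lie algebra theory'' to conclude reductivity. So where the paper appeals to a black box, you attempt to open it via Lie's theorem; the underlying fact in both cases is that $[\mathfrak{g}_{\bar 0},\mathfrak r]$ lies in the radical of every trace form.

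Your attempt to prove this fact, however, has a genuine gap in the step you yourself flag as the ``main obstacle''. You triangularize $\rho(\mathfrak r+Kx)$ on each $V_{\bar i}$ and correctly deduce that $\rho([x,z])$ is strictly upper triangular in that basis. But the basis depends on $x$, whereas you must compute $\mathrm{tr}(\rho([x,z])\rho(w))$ for an \emph{arbitrary} $w\in\mathfrak{g}_{\bar 0}$; there is no reason $\rho(w)$ is triangular in the same basis, and a product of a nilpotent operator with a general operator can have nonzero trace. The cyclic identity $\mathrm{tr}([A,B]C)=\mathrm{tr}(B[C,A])$ only rewrites this as $\mathrm{tr}(\rho(z)\rho([w,x]))$, which is no closer to zero. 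What is actually needed is the Invariance Lemma (in characteristic zero): the generalized $\mathfrak r$-weight spaces $V_\lambda$ are $\mathfrak{g}_{\bar 0}$-stable and every weight $\lambda$ vanishes on $[\mathfrak{g}_{\bar 0},\mathfrak r]$, so $\rho([x,z])$ is nilpotent on each $\mathfrak{g}_{\bar 0}$-stable block, and one then triangularizes $\mathfrak r+Kw$ (not $\mathfrak r+Kx$) on that block to kill the trace. This is exactly the content hidden inside the paper's ``standard result''. Your concern about the $\alpha$-twist is legitimate and is likewise not addressed by the paper, which simply asserts that the classical statement carries over.
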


\begin{proof}
  Suppose that $\phi$ is associated with the graded $\mathfrak{g}$-module $V$. Let $\phi^{x}$, $x\in \mathds{Z}_{2}$ be the bilinear form on $\mathfrak{g}_{\bar{0}}$ which is associated with the $\mathfrak{g}_{\bar{0}}$-module $V_{x}$.
  Then we have
\begin{equation}\label{3.3}
\phi(p,q)=\phi^{\bar{0}}(p,q)-\phi^{\bar{1}}(p,q),
\end{equation}
for all $p,q \in \mathfrak{g}_{\bar{0}}$. Set
\begin{equation}\label{3.4}
J^{x}=\{q\in \mathfrak{g}_{\bar{0}}| \phi^{x}(q,\mathfrak{g}_{\bar{0}})=\{0\}\},
\end{equation}
for any $\alpha\in \mathds{Z}_{2}$. Since $\phi$ is even and non degenerate, \eqref{3.3} implies $J^{\bar{0}}\cap J^{\bar{1}}=\{0\}.$ Then a standard result from Hom-Lie algebra theory says that $\mathfrak{g}_{^{\bar{0}}}$ has to be reductive.
\end{proof}
\begin{cor}\label{badprop2}
  Let $\mathfrak{g}$ be a simple Hom-Lie superalgebra whose Killing form is non-degenerate. Then $\mathfrak{g}$ is classical.
\end{cor}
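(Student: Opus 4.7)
The plan is to leverage Proposition \ref{prop2} directly and then argue complete reducibility of the adjoint action of $\mathfrak{g}_{\bar{0}}$ on $\mathfrak{g}_{\bar{1}}$. First I would observe that the Killing form of $\mathfrak{g}$ is, by its very definition, the invariant bilinear form associated with the adjoint representation of $\mathfrak{g}$ on the finite-dimensional graded $\mathfrak{g}$-module $V=\mathfrak{g}$. Hence the hypothesis places us exactly in the situation of Proposition \ref{prop2}, and we immediately conclude that the Hom-Lie algebra $\mathfrak{g}_{\bar{0}}$ is reductive. Since the Killing form $\phi$ is even, the non-degeneracy of $\phi$ on $\mathfrak{g}$ descends to non-degeneracy of its restriction to $\mathfrak{g}_{\bar{1}}\times\mathfrak{g}_{\bar{1}}$, and this restriction is $\mathfrak{g}_{\bar{0}}$-invariant by the invariance of $\phi$.

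Next I would split the action of the reductive algebra $\mathfrak{g}_{\bar{0}}=\mathfrak{z}\oplus [\mathfrak{g}_{\bar{0}},\mathfrak{g}_{\bar{0}}]$ into a central part and a semisimple part. The semisimple part acts completely reducibly on $\mathfrak{g}_{\bar{1}}$ by Weyl's theorem in its Hom-Lie formulation. To promote this to complete reducibility under the whole $\mathfrak{g}_{\bar{0}}$ action, it remains to show that every central element $c\in\mathfrak{z}$ acts semisimply on $\mathfrak{g}_{\bar{1}}$. For this I would take the Jordan decomposition $\mathrm{ad}(c)|_{\mathfrak{g}_{\bar{1}}} = s+n$; since $s$ and $n$ are polynomials in $\mathrm{ad}(c)$ and $c$ commutes with $\mathfrak{g}_{\bar{0}}$, both $s$ and $n$ are $\mathfrak{g}_{\bar{0}}$-module endomorphisms of $\mathfrak{g}_{\bar{1}}$.

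Finally I would eliminate $n$ using the non-degenerate invariant form. Because $\phi$ restricted to $\mathfrak{g}_{\bar{1}}$ is non-degenerate and $\mathfrak{g}_{\bar{0}}$-invariant, and because $n$ is nilpotent and commutes with the action, one can combine invariance of $\phi$ with the vanishing of the supertrace on brackets (via Lemma \ref{lem2}\ref{lem2:iii} applied to polynomials in $\mathrm{ad}(c)$) to force $n=0$. Equivalently, one can argue with the $\phi$-orthogonal $W^{\perp}$ of any generalized eigenspace $W$ of $\mathrm{ad}(c)$: invariance makes $W^{\perp}$ a submodule, and the non-degeneracy of $\phi|_{\mathfrak{g}_{\bar{1}}}$ rules out a nontrivial intersection $W\cap W^{\perp}$ on which the action of $c$ is non-semisimple.

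I expect the delicate step to be the last one, namely producing a genuinely $\mathfrak{g}_{\bar{0}}$-invariant complement; the reductivity obtained from Proposition \ref{prop2} only controls the $\mathfrak{g}_{\bar{0}}$-module structure through the Killing form, so the argument must stay faithful to the fact that $\phi$ is the Killing form of the full Hom-Lie superalgebra (not just its even part), in order to exploit both invariance and the trace-vanishing property of the adjoint representation simultaneously. Once complete reducibility of $\mathfrak{g}_{\bar{1}}$ as a $\mathfrak{g}_{\bar{0}}$-module is established, $\mathfrak{g}$ is classical by definition and the corollary follows.
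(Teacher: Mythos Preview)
The paper gives no proof of this corollary at all: it is simply recorded immediately after Proposition~\ref{prop2} as if it were an automatic consequence. Implicitly, the paper's argument is just ``the Killing form is the invariant form associated with the adjoint module, so by Proposition~\ref{prop2} the even part $\mathfrak{g}_{\bar 0}$ is reductive,'' and the passage from reductivity of $\mathfrak{g}_{\bar 0}$ to complete reducibility of the $\mathfrak{g}_{\bar 0}$-module $\mathfrak{g}_{\bar 1}$ is left entirely unspoken. You correctly identify that this passage is not automatic and go further than the paper by trying to supply it.

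However, your outline does not actually close the gap. Two concrete problems. First, you invoke ``Weyl's theorem in its Hom-Lie formulation'' for the semisimple summand $[\mathfrak{g}_{\bar 0},\mathfrak{g}_{\bar 0}]$, but no such result is proved or cited anywhere in this paper; you are importing a highly nontrivial statement without justification. Second, neither of your two sketches for killing the nilpotent part $n$ of $\mathrm{ad}(c)|_{\mathfrak{g}_{\bar 1}}$ for central $c$ is complete. Skew-adjointness of $n$ with respect to a non-degenerate bilinear form does not by itself force $n=0$ (nilpotent elements of $\mathfrak{so}$ or $\mathfrak{sp}$ are skew with respect to non-degenerate forms), so the $W^{\perp}$ argument does not rule out a nontrivial $W\cap W^{\perp}$; and Lemma~\ref{lem2}\ref{lem2:iii} yields only $\mathrm{str}(a_V)=0$ for $a\in\mathfrak{g}$, which gives no control over traces of higher powers of $\mathrm{ad}(c)$ as a Jordan-decomposition argument would need. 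In short, you have correctly located the step the paper glosses over, but your proposed repair remains a sketch with real holes rather than a proof.
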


The following two propositions contain some information on the existence of non-degenerate invariant bilinear forms on a Hom-Lie superalgebra.

\begin{prop}
  Let $\mathfrak{g}$ be a classical simple Hom-Lie superalgebra such that the center of $\mathfrak{g}_{\bar{0}}$ is nontrivial. Then the Killing form of $\mathfrak{g}$ is non-degenerate.
\end{prop}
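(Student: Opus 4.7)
The strategy is to exhibit a concrete pair of elements on which the Killing form does not vanish and then invoke the dichotomy of Proposition \ref{prop:splHomLieinvbf}\ref{prop:splHomLieinvbf:i}: an invariant bilinear form on a simple Hom-Lie superalgebra is either identically zero or non-degenerate. So it suffices to prove that the Killing form $\phi$ is not the zero form.

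The input I would use is Theorem \ref{corth1}. Since $\mathfrak{g}$ is classical simple with nontrivial center $\mathfrak{g}_{\frac{a}{0}} \subset \mathfrak{g}_{\bar 0}$, that theorem furnishes a distinguished element $c \in \mathfrak{g}_{\frac{a}{0}}$ together with a decomposition $\mathfrak{g}_{\bar 1} = \mathfrak{g}_{\frac{1}{1}} \oplus \mathfrak{g}_{\frac{2}{1}}$ into two nonzero irreducible $\mathfrak{g}_{\bar 0}$-submodules, on which $c$ acts by the scalar $(-1)^r$, $r = 1, 2$. I would test the Killing form on the pair $(c,c)$.

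Writing $\phi(a,b) = \operatorname{str}(\operatorname{ad}(a)\operatorname{ad}(b))$ and using that the supertrace of a degree-zero endomorphism $A$ of $\mathfrak{g} = \mathfrak{g}_{\bar 0} \oplus \mathfrak{g}_{\bar 1}$ equals $\operatorname{tr}(A|_{\mathfrak{g}_{\bar 0}}) - \operatorname{tr}(A|_{\mathfrak{g}_{\bar 1}})$, I would compute as follows. Because $c$ lies in the center of $\mathfrak{g}_{\bar 0}$, $\operatorname{ad}(c)$ vanishes on $\mathfrak{g}_{\bar 0}$, so the even part contributes nothing. On each summand $\mathfrak{g}_{\frac{r}{1}}$, $\operatorname{ad}(c)$ acts as $(-1)^r$, hence $\operatorname{ad}(c)^2$ acts as the identity and contributes $\dim \mathfrak{g}_{\frac{r}{1}}$ to the odd trace. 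Therefore
$$\phi(c,c) \;=\; 0 \;-\; \bigl(\dim \mathfrak{g}_{\frac{1}{1}} + \dim \mathfrak{g}_{\frac{2}{1}}\bigr) \;=\; -\dim \mathfrak{g}_{\bar 1}.$$
Since both summands are nonzero by Theorem \ref{corth1} and we are in characteristic zero, $\phi(c,c) \neq 0$, and Proposition \ref{prop:splHomLieinvbf}\ref{prop:splHomLieinvbf:i} then gives non-degeneracy.

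The main point requiring care is the first equality above: one must verify that the adjoint map used to define the Killing form in this Hom-setting (whether $\operatorname{ad}(x) = [\,\cdot\,, x]$ or one of the twisted versions $\operatorname{ad}_k(x)(y) = [\alpha^k(y), x]$ introduced earlier) still has the property that $c$ being central in $\mathfrak{g}_{\bar 0}$ forces its image to vanish on $\mathfrak{g}_{\bar 0}$. This is immediate in either case because all those operators factor through $[\,\cdot\,, c]|_{\mathfrak{g}_{\bar 0}} = 0$, so the sign conventions and twisting do not interfere. Once this bookkeeping is settled, the computation of $\phi(c,c)$ is essentially forced and the result drops out from the simplicity dichotomy with no further work.
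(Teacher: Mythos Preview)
Your argument is the ``right'' one and in fact spells out explicitly what the paper only asserts in one line: over an algebraically closed field, Theorem~\ref{corth1} gives the central element $c$ with $[c,x]=(-1)^r x$ on $\mathfrak{g}_{\frac{r}{1}}$, and your computation $\phi(c,c)=-\dim\mathfrak{g}_{\bar 1}\neq 0$ together with the dichotomy of Proposition~\ref{prop:splHomLieinvbf}\ref{prop:splHomLieinvbf:i} finishes the proof. That part is correct and matches the paper's route.

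There is, however, a genuine gap. Theorem~\ref{corth1} is stated under the hypothesis that $K$ is algebraically closed (it uses Schur's lemma to make a nonzero central element act as a scalar on each irreducible summand), and you invoke it without this restriction. The paper's proof treats the algebraically closed case first exactly as you do, and then handles an arbitrary base field by a separate reduction: one passes to an algebraically closed extension $E$, writes $\hat{\mathfrak{g}}=E\otimes_K\mathfrak{g}$ as a direct sum of classical simple graded ideals $\hat{\mathfrak{g}}^r$, observes that at least one $\hat{\mathfrak{g}}^r_{\bar 0}$ has nontrivial center, and applies the algebraically closed case together with Lemma~\ref{lem1} to conclude that the Killing form of $\hat{\mathfrak{g}}$, hence of $\mathfrak{g}$, is nonzero. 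Your write-up is missing this base-change step; as written it proves the proposition only over algebraically closed $K$.
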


\begin{proof}
Denote the Killing form of $\mathfrak{g}$ by $\phi$. If the field $K$ is algebraically closed, the proposition follows directly from
Corollary \ref{corth1}.

Suppose now that the field $K$ is arbitrary. Let $E$ be an algebraically closed extension field of $K$ and let $\hat{\mathfrak{g}}=\otimes \mathfrak{g}$ denote the Hom-Lie superalgebra which is obtained from $\mathfrak{g}$ be extension of base field from $K$ to $E$. We know that the Hom-Lie superalgebra $\hat{\mathfrak{g}}$ is the direct sum of graded ideals $\hat{\mathfrak{g}}^{r}$, $1\leq r\leq t$, which are all classical simple Hom-Lie superalgebras. Since at least one of the Hom-Lie algebras $\hat{\mathfrak{g}}\frac{r}{0}$ has a nontrivial center, the first part of our proof, combined with the subsequent lemma, shows that the Killing form of $\hat{\mathfrak{g}}$ is non-zero. But then the Killing form $\phi$ of $\mathfrak{g}$ is also non-zero, hence $\phi$ is non-degenerate.
\end{proof}

\begin{lem}\label{lem1}
  Let $\mathfrak{g}$ be a Hom-Lie superalgebra and let $\mathfrak{g}^{'}$ be a graded ideal of $\mathfrak{g}$. If $\phi$ {\rm (}resp. $\phi^{'}${\rm )} is the Killing form of $\mathfrak{g}$ {\rm (}resp. $\mathfrak{g}'${\rm )}. Then the restriction of $\phi$ to $\mathfrak{g}'$ is equal to $\phi^{'}$. If $\mathfrak{g}'$ and $\mathfrak{g}''$ are two graded ideals of $\mathfrak{g}$ such that $[\mathfrak{g}',\mathfrak{g}'']=\{0\}$, then these ideals are orthogonal with respect to $\phi$.
\end{lem}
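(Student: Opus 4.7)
The plan is to work directly from the supertrace description of the Killing form, namely $\phi(x,y) = \operatorname{str}\bigl(\operatorname{ad}(x)\operatorname{ad}(y)\bigr)$, and to exploit the one-sided invariance of an ideal to force a block-triangular structure on the relevant composition of $\operatorname{ad}$-operators. First I would fix a homogeneous basis of $\mathfrak{g}'$ and extend it by homogeneous vectors of a graded complement $W$ of $\mathfrak{g}'$ in $\mathfrak{g}$, so that the chosen basis is adapted both to the grading and to the decomposition $\mathfrak{g}=\mathfrak{g}'\oplus W$. For $x,y\in\mathfrak{g}'$, the fact that $\mathfrak{g}'$ is a graded ideal implies $\operatorname{ad}(x)(\mathfrak{g})\subseteq\mathfrak{g}'$, and therefore $\operatorname{ad}(x)\operatorname{ad}(y)$ also sends $\mathfrak{g}$ into $\mathfrak{g}'$. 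In the adapted basis its matrix has the block form
$$\begin{pmatrix} A & B \\ 0 & 0 \end{pmatrix},$$
where $A$ represents the restriction to $\mathfrak{g}'$. Because the grading operator $\gamma$ on $\mathfrak{g}$ restricts to the grading operator on $\mathfrak{g}'$, the supertrace splits along this block decomposition, giving $\operatorname{str}\bigl(\operatorname{ad}(x)\operatorname{ad}(y)\bigr)=\operatorname{str}(A)=\phi'(x,y)$, which is the first claim.

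For the second assertion I would show that $\operatorname{ad}(x)\operatorname{ad}(y)$ is identically zero whenever $x\in\mathfrak{g}'$ and $y\in\mathfrak{g}''$, so its supertrace is automatically zero. Indeed, for any $z\in\mathfrak{g}$, the ideal property of $\mathfrak{g}''$ yields $\operatorname{ad}(y)(z)=[y,z]\in\mathfrak{g}''$, and then the hypothesis $[\mathfrak{g}',\mathfrak{g}'']=\{0\}$ gives $\operatorname{ad}(x)\operatorname{ad}(y)(z)\in[x,\mathfrak{g}'']\subseteq[\mathfrak{g}',\mathfrak{g}'']=\{0\}$. This shows $\phi(x,y)=0$ for homogeneous $x\in\mathfrak{g}'$, $y\in\mathfrak{g}''$, and bilinearity extends it to all pairs.

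The main subtlety I expect, and the only non-routine point, is the compatibility of this block-triangular argument with the $\alpha$-twisting present in the Hom-Lie setting. Since $\alpha$ has degree zero and preserves the graded ideal $\mathfrak{g}'$ (which should be taken as part of the definition of a Hom-ideal, together with $\alpha(\mathfrak{g}')\subseteq\mathfrak{g}'$), the $\alpha^{k}$-shifted adjoint operators $\operatorname{ad}_{k}(x)$ from Section~\ref{sec:HomLiesuperalg} still carry $\mathfrak{g}$ into $\mathfrak{g}'$ when $x\in\mathfrak{g}'$, so the block structure and supertrace computation go through verbatim for whichever variant of $\operatorname{ad}$ appears in the definition of $\phi$. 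Once this is verified, both assertions follow from the two supertrace computations above.
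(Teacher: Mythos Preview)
Your argument is correct and is exactly the standard block-triangular supertrace computation one would expect here; the paper itself offers no details beyond declaring the proof ``obvious,'' so there is nothing to compare against in terms of strategy. Your attention to the $\alpha$-invariance of a graded Hom-ideal is appropriate and ensures the argument transfers cleanly from the classical Lie superalgebra case.
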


\begin{proof}
  The proof is obvious.
\end{proof}
\begin{prop}
Let $\mathfrak{g}$ be a Hom-Lie superalgebra whose Killing form is non-degenerate. Then every $\alpha^{k}$ derivation of $\mathfrak{g}$ is an inner derivation.
\end{prop}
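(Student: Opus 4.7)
The plan is to adapt the classical Lie-algebra argument that a non-degenerate Killing form forces all derivations to be inner. Set $L=\mathrm{Der}(\mathfrak{g})=\bigoplus_{k\geq -1}\mathrm{Der}_{\alpha^{k}}(\mathfrak{g})$, which the excerpt already records as a Hom-Lie algebra, and let $M=\bigoplus_{k}\mathrm{Inn}_{\alpha^{k}}(\mathfrak{g})\subseteq L$ denote the subspace of inner derivations. The two ingredients I will need are: (a) $M$ is a graded ideal of $L$, via a bracket identity of the form $[D,\mathrm{ad}_{j}(x)]=\pm\,\mathrm{ad}_{?}(D(x))$; and (b) the restriction of the Killing form $\phi_{L}$ of $L$ to $M$ corresponds, through the map $x\mapsto \mathrm{ad}(x)$, to the Killing form $\phi$ of $\mathfrak{g}$.

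First I would use non-degeneracy of $\phi$ to deduce that the graded centre $Z(\mathfrak{g})=\{x\in\mathfrak{g}\mid[x,\mathfrak{g}]=\{0\}\}$ is trivial: if $x\in Z(\mathfrak{g})$ then $\mathrm{ad}(x)=0$, so $\phi(x,\cdot)=\mathrm{str}(\mathrm{ad}(x)\,\mathrm{ad}(\cdot))=0$, and non-degeneracy forces $x=0$. Consequently $x\mapsto\mathrm{ad}(x)$ identifies $\mathfrak{g}$ (restricted to the $\alpha$-fixed part entering the definition of inner derivations) with $M$, and the transported Killing form on $M$ is non-degenerate by hypothesis.

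Next, let $I=M^{\perp}\subseteq L$ with respect to $\phi_{L}$. Invariance of $\phi_{L}$ together with the fact that $M$ is a graded ideal shows that $I$ is also a graded ideal of $L$. Non-degeneracy of $\phi_{L}|_{M}$ gives $M\cap I=\{0\}$ and, by the standard argument of choosing an $M$-component via the non-degenerate pairing, the decomposition $L=M\oplus I$. Pick any $D\in I$; then for every $x\in\mathfrak{g}$,
\[
[D,\mathrm{ad}(x)]\in M\cap I=\{0\},
\]
and using the bracket identity from step (a) this yields $\mathrm{ad}(D(x))=0$, i.e. $D(x)\in Z(\mathfrak{g})=\{0\}$. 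Hence $D=0$, so $I=\{0\}$ and $L=M$, which is exactly the statement that every $\alpha^{k}$-derivation is inner.

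The main obstacle is entirely bookkeeping: verifying in the Hom-super setting the precise identity $[D,\mathrm{ad}_{j}(x)]=\pm\,\mathrm{ad}_{k+j}(D(x))$ with the correct sign prefactor depending on $|D||x|$, checking that this bracket indeed lands in $M$ (so that the $\alpha$-invariance condition $\alpha(D(x))=D(x)$ is maintained under the hypotheses, possibly via $D\circ\alpha=\alpha\circ D$), and confirming the Killing-form comparison with supertrace. None of these is conceptually difficult, but together they constitute the bulk of the work; once they are in place the rest of the proof is the classical orthogonal-complement argument outlined above.
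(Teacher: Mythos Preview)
Your proposal follows essentially the same route as the paper: the paper works inside $\mathrm{Der}_{\alpha^{k}}(\mathfrak{g})$, notes that $\mathrm{ad}_{k}(\mathfrak{g})$ is a graded ideal on which the ambient Killing form $\tilde{\phi}$ restricts non-degenerately (via the lemma that the Killing form of an ideal agrees with the restriction of the Killing form of the whole algebra), and then shows that the orthogonal complement $J=\{d:\tilde{\phi}(d,\mathrm{ad}_{k}(\mathfrak{g}))=0\}$ consists of derivations annihilating $\mathfrak{g}$, hence $J=\{0\}$. The only cosmetic difference is that the paper packages your bracket identity $[D,\mathrm{ad}(x)]=\pm\,\mathrm{ad}(D(x))$ as the invariance relation $\tilde{\phi}(\mathrm{ad}_{k}(d(a)),\mathrm{ad}_{k}(b))=\tilde{\phi}(d,\mathrm{ad}_{k}[a,b])$ and works one $k$ at a time rather than in the full $\mathrm{Der}(\mathfrak{g})$.
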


\begin{proof}
  Let $\tilde{\phi}$ be the Killing form of $Der_{\alpha^{k}}(\mathfrak{g})$. From \cite{AmmarMakhloufSaadaoui2013:CohlgHomLiesupqdefWittSup} we know that
  $$ad_{k}:\mathfrak{g} \rightarrow Der_{\alpha^{k}}(\mathfrak{g}),$$
  is a homomorphism of Hom-Lie superalgebras. Since the Killing form of $\mathfrak{g}$ is non-degenerate, this homomorphism is injective. Furthermore, the image $ad_{k}(\mathfrak{g})$ of $\mathfrak{g}$ is a Hom-graded ideal of $Der_{\alpha^{k}}(\mathfrak{g})$. According to Lemma \ref{lem1}, the restriction of $\tilde{\phi}$ to $ad_{k}(\mathfrak{g})$ is equal to the Killing form of $ad_{k}(\mathfrak{g})$. Thus, this restriction is non-degenerate.

  On the other hand, we have
  \begin{equation}\label{3.16}
  \tilde{\phi}(ad_{k}(d(a)),ad_{k}(b))= \tilde{\phi}(d, ad_{k}[a,b]),
  \end{equation}
  for all $d \in Der_{\alpha^{k}}(\mathfrak{g})$ and $a,b \in \mathfrak{g}$. Now, let
  \begin{equation}\label{3.17}
    J=\{d\in Der_{\alpha^{k}}(\mathfrak{g})| \tilde{\phi}(d,ad_{k}{\mathfrak{g}})=\{0\}\}.
  \end{equation}
  Then \eqref{3.16} and the foregoing remark imply that $d(a)=0$, for all $d\in J$ and $a\in \mathfrak{g}$. Therefore, we have shown that $J=\{0\}$ and the proof is complete.
\end{proof}

\begin{thm}\label{th1}
  Let $\mathfrak{g}$ be a Hom-Lie superalgebra containing no non-zero commutative graded ideals. Suppose that there exists a homogeneous non-degenerate invariant bilinear form $\phi$ on $\mathfrak{g}$.
  In this case $\mathfrak{g}$ has only a finite number of minimal graded ideals $\mathfrak{g}^{r}$, $1\leq r\leq t$, and $\mathfrak{g}$ is their direct sum. The ideals $\mathfrak{g}^{r}$ are simple Hom-Lie superalgebras and they are mutually orthogonal with respect to the bilinear form $\phi$ which is supersymmetric. Any left or right ideal of $\mathfrak{g}$ is graded and equal to $\bigoplus_{r\in R}\mathfrak{g}^{r}$ with a suitable set $R$, where $R\subset \{1,\ldots,t\}$.
\end{thm}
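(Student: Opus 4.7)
The plan is an induction on $\dim \mathfrak{g}$ that imitates the classical argument of splitting off a minimal ideal via its $\phi$-orthogonal complement. First I would pick a nonzero minimal graded ideal $\mathfrak{g}^1 \subset \mathfrak{g}$ and set $M := (\mathfrak{g}^1)^{\perp}$ with respect to $\phi$. Because $\phi$ is homogeneous and invariant, $M$ is a graded ideal, and $\mathfrak{g}^1 \cap M$ is a graded sub-ideal of $\mathfrak{g}^1$, hence by minimality equals $\mathfrak{g}^1$ or $\{0\}$. To exclude the first possibility, I would argue: if $\phi(\mathfrak{g}^1,\mathfrak{g}^1)=0$, then $[\mathfrak{g}^1,\mathfrak{g}^1]$ is a graded ideal of $\mathfrak{g}$ contained in $\mathfrak{g}^1$, so by minimality it equals $\{0\}$ (ruled out by the no-commutative-ideals hypothesis) or $\mathfrak{g}^1$; in the latter case $\phi(\mathfrak{g}^1,\mathfrak{g}) = \phi([\mathfrak{g}^1,\mathfrak{g}^1],\mathfrak{g}) \subseteq \phi(\mathfrak{g}^1,[\mathfrak{g}^1,\mathfrak{g}]) \subseteq \phi(\mathfrak{g}^1,\mathfrak{g}^1) = 0$, contradicting non-degeneracy of $\phi$. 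Hence $\mathfrak{g} = \mathfrak{g}^1 \oplus M$.

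To run the induction, I would observe that $M$ inherits all the hypotheses: $\phi|_M$ remains homogeneous, invariant and non-degenerate, and any commutative graded ideal $J$ of $M$ is automatically an ideal of $\mathfrak{g}$, because $[\mathfrak{g}^1, J] \subseteq \mathfrak{g}^1 \cap M = \{0\}$, so $J$ must be zero by hypothesis. The inductive hypothesis then decomposes $M = \bigoplus_{r \geq 2}\mathfrak{g}^r$; since each $\mathfrak{g}^r$ commutes with $\mathfrak{g}^1$, they remain ideals of $\mathfrak{g}$. Each $\mathfrak{g}^r$ is simple because any graded ideal of $\mathfrak{g}^r$ is an ideal of $\mathfrak{g}$ (using pairwise commutation of the summands) and by minimality must be $\{0\}$ or $\mathfrak{g}^r$; moreover $[\mathfrak{g}^r,\mathfrak{g}^r]\neq \{0\}$, as otherwise $\mathfrak{g}^r$ would be a commutative graded ideal. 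Summing gives $[\mathfrak{g},\mathfrak{g}]=\mathfrak{g}$, whence Proposition \ref{prop3.1} forces $\phi$ to be supersymmetric, and pairwise orthogonality of the $\mathfrak{g}^r$ follows from Lemma \ref{lem1} (or directly from the construction).

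The final and most delicate step is the classification of one-sided ideals. Given a left ideal $I$, I would intersect with each simple summand: $I \cap \mathfrak{g}^r$ is a left Hom-ideal of the simple Hom-Lie superalgebra $\mathfrak{g}^r$, so by Proposition \ref{prop1} it equals $\{0\}$ or $\mathfrak{g}^r$. Writing $R = \{r : I \cap \mathfrak{g}^r = \mathfrak{g}^r\}$ gives $\bigoplus_{r \in R} \mathfrak{g}^r \subseteq I$, so the real work is the reverse inclusion: for $a \in I$ with components $a^r \in \mathfrak{g}^r$, one has $[\mathfrak{g}^r, a^r] = [\mathfrak{g}^r, a] \subseteq I \cap \mathfrak{g}^r$, which forces $a^r$ to centralize $\mathfrak{g}^r$ whenever $r \notin R$. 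The main obstacle is then to verify that the self-centralizer of a simple Hom-Lie superalgebra vanishes; I would handle this with a short side lemma showing that the Hom-Jacobi identity, combined with $[\mathfrak{g}^r,\mathfrak{g}^r]=\mathfrak{g}^r$, makes this centralizer a graded Hom-ideal of $\mathfrak{g}^r$, which cannot be all of $\mathfrak{g}^r$ since $[\mathfrak{g}^r,\mathfrak{g}^r]\neq \{0\}$, hence is zero by simplicity. Then $a^r = 0$ for $r \notin R$ and $I = \bigoplus_{r \in R}\mathfrak{g}^r$, which is automatically graded; the right-ideal case is symmetric.
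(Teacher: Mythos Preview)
Your proposal is correct and follows essentially the same induction-on-dimension argument as the paper: split off a minimal graded ideal via its $\phi$-orthogonal complement, verify the complement inherits the hypotheses, and then classify one-sided ideals by intersecting with the simple summands and invoking Proposition~\ref{prop1}. The only noteworthy differences are cosmetic: the paper rules out $\mathfrak{g}^{1}\subset(\mathfrak{g}^{1})^{\perp}$ in one stroke by computing $\phi(\mathfrak{g},[\mathfrak{g}^{1},\mathfrak{g}^{1}])=\phi([\mathfrak{g},\mathfrak{g}^{1}],\mathfrak{g}^{1})=0$ (avoiding your case split on $[\mathfrak{g}^{1},\mathfrak{g}^{1}]$), while conversely you make explicit the centralizer side-lemma that the paper leaves tacit when it passes from $[\mathfrak{g}^{s},\mathfrak{g}']=\{0\}$ to $\mathfrak{g}'\subset\bigoplus_{r\neq s}\mathfrak{g}^{r}$.
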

\begin{proof}
If $J$ is a minimal graded ideal of $\mathfrak{g}$, then
\begin{equation}\label{3.19}
  J^{\perp}=\{a\in \mathfrak{g}| \phi(a,J)=\{0\}\}
\end{equation}
is also a graded ideal of $\mathfrak{g}$. Now suppose that $J$ is a non-zero minimal graded ideal of $\mathfrak{g}$. Then $J^{\perp}\cap J$ is a graded ideal of $\mathfrak{g}$ and therefore it is equal to $\{0\}$ or to $J$. If $J^{\perp}\cap J=J$, we have $J \subset J^{\perp}$. So
\begin{equation}\label{3.20}
\phi(\mathfrak{g},[J,J]))=\phi([\mathfrak{g},J],J)\subset \phi(J^{\perp},J)=\{0\}.
\end{equation}
Since $\phi$ is non-degenerate, we conclude that $[J,J]=\{0\}$. But, according to the assumptions, there is no non-zero commutative graded ideals contained in $\mathfrak{g}$. This is a contradiction, thus
\begin{eqnarray}\label{3.21}
  &\mathfrak{g}= J^{\perp}\oplus J  \ , &\\
  \label{3.22}
 & [J^{\perp},J]\subset J^{\perp}\cap J=\{0\}.&
\end{eqnarray}
Therefore, each graded ideal of $J$ or $J^{\perp}$, is a graded ideal of $\mathfrak{g}$. This shows that $J$ is a simple Hom-Lie superalgebra and that $J^{\perp}$ does not contain any non-zero commutative graded ideals. Moreover, it is obvious that $\phi^{\perp}$, which is the restriction of $\phi$ to $J^{\perp}$ is non-degenerate. As a consequence, the pair $(J^{\perp},\phi^{\perp})$ satisfies the same conditions as $(\mathfrak{g},\phi)$ does. Using induction on $\dim \mathfrak{g}$, one can consider
$$\mathfrak{g}=\bigoplus^{t}_{r=1}\mathfrak{g}^{r},$$
where the $\mathfrak{g}^{r}$, are minimal (and hence simple) graded ideals of $\mathfrak{g}$ which are mutually orthogonal with respect to $\phi$ in the sense that
\begin{equation}\label{3.24}
  \phi(\mathfrak{g}^{r},\mathfrak{g}^{s})=\{0\},
\end{equation}
where $1\leq r< s\leq t$. Consequently, we get $[\mathfrak{g},\mathfrak{g}]=\mathfrak{g}$. Therefore, The bilinear form $\phi$ is supersymmetric due to Proposition \ref{prop3.1}. Thus, \eqref{3.24} may be generalized to
$$\phi(\mathfrak{g}^{r},\mathfrak{g}^{s})=\{0\},$$
where $r,s \in \{1,\ldots,t\};r\neq s$.

Now let $\mathfrak{g}'$ be a (non not necessarily graded) left ideal of $\mathfrak{g}$. If $s\in \{1,\ldots,t\}$, the intersection $\mathfrak{g}^{s}\cap \mathfrak{g}'$ is a left ideal of $\mathfrak{g}^{s}$, so $\mathfrak{g}^{s}\cap \mathfrak{g}'$ is equal to $\{0\}$ or to $\mathfrak{g}^{s}$ by Proposition \ref{prop1}. In the first case, we have
$$[\mathfrak{g}^{s}, \mathfrak{g}']=\mathfrak{g}^{s}\cap \mathfrak{g}'=\{0\}$$
and hence
$$\mathfrak{g}'\subset \bigoplus_{r\neq s}\mathfrak{g}^{r}.$$
In the second case, $\mathfrak{g}^{s}\subset \mathfrak{g}'$. This, immediately implies that
$$\mathfrak{g}'=\bigoplus_{r\in R}\mathfrak{g}^{r}$$
where $R$ is a suitable subset of $\{1,\ldots,t\}$. In particular, each minimal graded ideal of $\mathfrak{g}$ is equal to some $\mathfrak{g}^{r}$.
The case in which $\mathfrak{g}'$ is a right ideal is similar.
\end{proof}

\begin{cor}\label{badth1}
  The Killing form of a Hom-Lie superalgebra is non-degenerate if and only if $\mathfrak{g}$ is the direct product of classical simple Hom-Lie superalgebras whose Killing forms are non-degenerate. This is the case when the Hom-Lie algebra $\mathfrak{g}_{\bar{0}}$ is reductive and the representation of $\mathfrak{g}_{\bar{0}}$ in $\mathfrak{g}_{\bar{1}}$ is completely reducible.
\end{cor}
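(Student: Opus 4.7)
The plan is to split the corollary into three pieces: the two directions of the biconditional, and the sufficient-condition sentence at the end. The easy direction is $(\Leftarrow)$. Suppose $\mathfrak{g} = \bigoplus_{r=1}^{t}\mathfrak{g}^{r}$ is a direct product of classical simple Hom-Lie superalgebras each having a non-degenerate Killing form. Since the $\mathfrak{g}^{r}$ are graded ideals and $[\mathfrak{g}^{r},\mathfrak{g}^{s}]=\{0\}$ for $r\neq s$, both assertions of Lemma \ref{lem1} apply: the restriction of the Killing form $\phi$ of $\mathfrak{g}$ to each $\mathfrak{g}^{r}$ coincides with the Killing form of $\mathfrak{g}^{r}$, and the $\mathfrak{g}^{r}$ are mutually $\phi$-orthogonal. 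Hence $\phi$ splits as an orthogonal direct sum of non-degenerate forms and is itself non-degenerate.

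For the direction $(\Rightarrow)$, assume that $\phi$ is non-degenerate. The strategy is to reduce to Theorem \ref{th1} by first verifying that $\mathfrak{g}$ has no non-zero commutative graded ideals. Given an abelian graded ideal $I\subset\mathfrak{g}$, the standard supertrace argument applies: for $x\in I$ and $y\in\mathfrak{g}$, the operator $ad(y)\circ ad(x)$ maps $\mathfrak{g}$ into $I$ (because $ad(x)(\mathfrak{g})\subset I$ and $ad(y)$ preserves $I$), and since $[I,I]=\{0\}$ a further application of $ad(x)$ annihilates the image; this gives $(ad(x)\circ ad(y))^{2}=0$, so $ad(x)\circ ad(y)$ is nilpotent and has vanishing supertrace. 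Thus $\phi(x,y)=0$, which combined with the non-degeneracy of $\phi$ forces $I=\{0\}$. Theorem \ref{th1} then yields a decomposition $\mathfrak{g} = \bigoplus_{r=1}^{t}\mathfrak{g}^{r}$ into simple graded ideals that are mutually $\phi$-orthogonal. By Lemma \ref{lem1} the restriction of $\phi$ to $\mathfrak{g}^{r}$ equals the Killing form of $\mathfrak{g}^{r}$; orthogonality plus the non-degeneracy of $\phi$ on $\mathfrak{g}$ forces each such restriction to be non-degenerate. Finally, Corollary \ref{badprop2} promotes each simple $\mathfrak{g}^{r}$ with non-degenerate Killing form to a classical simple Hom-Lie superalgebra, completing the direction.

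For the last sentence, I would argue that under the two hypotheses $\mathfrak{g}$ still has no non-zero commutative graded ideal, and deduce non-degeneracy of the Killing form from the already established structure theory. If $I = I_{\bar 0}\oplus I_{\bar 1}$ were an abelian graded ideal, reductivity of $\mathfrak{g}_{\bar 0}$ would force $I_{\bar 0}$ into the centre of $\mathfrak{g}_{\bar 0}$, while complete reducibility of $\mathfrak{g}_{\bar 1}$ as a $\mathfrak{g}_{\bar 0}$-module would produce a $\mathfrak{g}_{\bar 0}$-invariant complement to $I_{\bar 1}$, allowing one to split off $I$ as a direct factor; compatibility with $[I,I]=\{0\}$ and the condition $[\mathfrak{g},\mathfrak{g}]=\mathfrak{g}$ built into our simplicity framework rules out a non-trivial abelian summand. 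One can then apply Theorem \ref{th1} to the non-degenerate invariant form arising from the faithful adjoint-type representation guaranteed by reductivity and complete reducibility (the converse viewpoint to Proposition \ref{prop2}), decompose $\mathfrak{g}$ into simple graded ideals, and verify that each factor has a non-degenerate Killing form by the same supertrace considerations used above combined with Corollary \ref{badprop2}.

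The main obstacle is the final paragraph: the equivalence itself is a straightforward package of Theorem \ref{th1}, Lemma \ref{lem1}, and Corollary \ref{badprop2}, but extracting non-degeneracy of the Killing form solely from reductivity plus complete reducibility requires a converse to Proposition \ref{prop2}. The delicate point is guaranteeing that the ``trace form'' contribution from $\mathfrak{g}_{\bar 1}$ cancels neither all of the Killing form of $\mathfrak{g}_{\bar 0}$ nor the off-diagonal block; this is typically where one imports the classification of classical simple Hom-Lie superalgebras, so I would expect the bulk of the technical effort to consist in checking that under the stated hypotheses the abelian-graded-ideal obstruction really vanishes, after which the rest follows mechanically from the preceding results.
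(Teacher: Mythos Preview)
Your treatment of the biconditional matches the paper's: for $(\Rightarrow)$ you exclude non-zero abelian graded ideals (your nilpotency argument for $(ad(x)\,ad(y))^{2}=0$ is a valid, slightly more explicit variant of the paper's observation that $[\mathfrak{g},[J,\mathfrak{g}]]\subset J$ and $[J,J]=\{0\}$ force $\varphi(\mathfrak{g},J)=\{0\}$), then invoke Theorem~\ref{th1}, Lemma~\ref{lem1}, and Corollary~\ref{badprop2}; for $(\Leftarrow)$ both you and the paper simply appeal to Lemma~\ref{lem1}.

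The genuine gap is in the last sentence. You have read ``This is the case when \dots'' as a \emph{sufficient} condition and set out to prove a converse to Proposition~\ref{prop2}. The paper intends and proves the opposite implication: once the Killing form is non-degenerate and $\mathfrak{g}$ is decomposed into classical simple factors, Proposition~\ref{prop2} gives that $\mathfrak{g}_{\bar 0}$ is reductive, and ``classical'' means precisely that each $\mathfrak{g}^{r}_{\bar 1}$ is a completely reducible $\mathfrak{g}^{r}_{\bar 0}$-module, hence $\mathfrak{g}_{\bar 1}$ is completely reducible over $\mathfrak{g}_{\bar 0}$. That is a two-line consequence, not a separate argument.

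The direction you attempt is in fact false, so the obstacle you flagged is not merely technical. Already in the ordinary (non-Hom) theory there are classical simple Lie superalgebras---for instance $A(n,n)=\mathfrak{psl}(n{+}1\,|\,n{+}1)$---for which $\mathfrak{g}_{\bar 0}$ is reductive and $\mathfrak{g}_{\bar 1}$ is completely reducible, yet the Killing form vanishes identically. Thus no amount of work on abelian graded ideals or ``adjoint-type'' trace forms will produce non-degeneracy from those hypotheses alone; the correct reading is that reductivity and complete reducibility are \emph{consequences} of a non-degenerate Killing form, not hypotheses that imply it.
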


\begin{proof}
Let $\mathfrak{g}$ be a Hom-Lie superalgebra whose Killing form $\varphi$ is non-degenerate. We need to use Theorem \ref{th1}, so we must show that $\mathfrak{g}$ doesn't contain any non-zero commutative graded ideals. Let $J$ be a commutative graded ideal of $\mathfrak{g}$. Then we get
\begin{equation}\label{3.29}
[\mathfrak{g},[J,\mathfrak{g}]]\subset J, \quad [\mathfrak{g},[J,J]]=\{0\}.
\end{equation}
It is easy to see that this implies
$\varphi(\mathfrak{g},J)=\{0\}.$
Therefore $J=\{0\}$, since $\varphi$ is non-degenerate which allows us to use Theorem \ref{th1}. The simple graded ideals $\mathfrak{g}^{r}$ are orthogonal with respect to $\varphi$, so the restriction of $\varphi$ to $\mathfrak{g}^{r}$ is non-degenerate, which is the Killing form of $\mathfrak{g}^{r}$, by Lemma \ref{lem1}. In particular it follows that the simple Hom-Lie superalgebras $\mathfrak{g}^{r}$ are classical (see Corollary \ref{badprop2}). This shows that the Hom-Lie algebra $\mathfrak{g}_{\bar{0}}$ is reductive (by Proposition \ref{prop2}) and that the representation of $\mathfrak{g}_{\bar{0}}$ in $\mathfrak{g}_{\bar{1}}$ is completely reducible.
The converse is clear by Lemma \ref{lem1}.

\end{proof}

\section{Acknowledgement}
We would like to thank Iran National Science Foundation (INSF) for support of the research in this paper.


\end{document}